\documentclass[smallextended]{sn-jnl}
\usepackage[english]{babel}
\usepackage{
    orcidlink,
    amsmath,
    amssymb,
    amsthm,
    amsfonts,
    mdframed,
    xcolor,
    caption,
    subcaption,
    comment, 
    mathrsfs,
    enumitem,
    float,
    graphicx,
    epstopdf,
    multirow,
    textcomp,
    manyfoot,
    booktabs,
    algorithm,
    algorithmicx,
    algpseudocode,
    listings,
    accents,
    sidecap
}

\title{A-priori error estimation for space-time Galerkin POD for linear evolution problems}

\author[1]{\fnm{Carmen} \sur{Gr\"a\ss le} \orcidlink{0000-0003-0318-0740}}\email{c.graessle@tu-braunschweig.de}
\author[2]{\fnm{Jan} \sur{Heiland} \orcidlink{0000-0003-0228-8522}}\email{jan.heiland@tu-ilmenau.de}
\author*[1]{\fnm{Jannis} \sur{Marquardt} \orcidlink{0009-0008-0248-7533}}\email{j.marquardt@tu-braunschweig.de}

\affil[1]{\orgdiv{Institute for Partial Differential Equations}, \orgname{Technische Universität Braunschweig}, \orgaddress{\street{Universit\"atsplatz 2}, \city{Braunschweig}, \postcode{38106}, \country{Germany}}}

\affil[2]{\orgdiv{Department of Mathematics and Natural Sciences, Institute of Mathematics}, \orgname{Technische
Universit\"at Ilmenau}, \orgaddress{\street{Weimarer Str. 25}, \city{Ilmenau}, \postcode{98693}, \country{Germany}}}

\theoremstyle{plain}% Theorem-like structures provided by amsthm.sty
\newtheorem{theorem}{Theorem}[section]
\newtheorem{lemma}[theorem]{Lemma}

\newtheorem{proposition}[theorem]{Proposition}
\newtheorem{remark}[theorem]{Remark}

\newtheorem{example}[theorem]{Example}

\newcommand{\R}{\mathbb{R}}

\newcommand{\M}{\mathbf{M}}
\newcommand{\K}{\mathbf{K}}

\newcommand{\A}{\mathcal{A}}

\newcommand{\Proj}{\mathcal{P}}

\newcommand{\Y}{\mathcal{Y}}

\renewcommand{\S}{\mathcal{S}}

\renewcommand{\L}{\mathcal{L}}

\newcommand{\intd}{\,\text{d}}

\newcommand{\braket}[2]{
\left\langle #1, #2 \right\rangle
}

\begin{document}

\abstract{
    In this paper, we propose an a-priori error estimate for the model order reduction (MOR) method of space-time proper orthogonal decomposition (space-time POD). The original space-time POD approach extends standard POD by reducing not only the space dimension but simultaneously the time dimension as well. The proposed a-priori error estimate is developed for a linear parabolic partial differential equation and estimates the error between the numerical solution to a linear parabolic partial differential equation (PDE) and its space-time POD reduced solution. Numerical examples illustrate the occurring errors and analyze them in comparison to the theoretical bounds.} 

\keywords{model order reduction, space-time proper orthogonal decomposition, error estimate, evolution problems, initial condition}
 
\pacs[MSC Classification]{
35K20, % Initial-boundary value problems for second-order parabolic equations
41A25, %Rate of convergence, degree of approximation
65M15, % Error bounds for initial value and initial-boundary value problems involving PDEs
65M60 % Finite elements, Rayleigh-Ritz and Galerkin methods, finite methods
}

\maketitle

\section*{Novelty statement}
In this paper, we consider the space-time proper orthogonal decomposition (space-time POD) introduced in \cite{baumann18} and extend the results by proposing an a-priori error bound for the error between a solution to a full order model, namely the numerical solution to a linear parabolic partial differential equation, and the solution to the corresponding reduced order model. In detail, this work contains the following novelty:
\begin{itemize}
    \item Lemma~\ref{lem:rewrite_x} extends \cite[Corollary~2.5]{baumann18}
    \item In Section~\ref{sec:space_time_pod_initial}, a new procedure to include the initial condition in the space-time POD context is proposed. 

    \item Lemma~\ref{lem:non-orthogonal-coeff-mat-connection} contains the computation of a reduced coefficient matrix in the special case, where the first column of the full order coefficient matrix has been altered and set to zero.

    \item The main contribution of this paper is the derivation of an a-priori error estimate for space-time POD in Proposition~\ref{prop:rho_error}, Proposition~\ref{prop:bound_evolution_problem}, which is similar to well known a-priori error estimates for standard POD. 

    \item For the numerical examples in Section~\ref{sec:numerical examples}, the measurement matrix is computed using a tensor product finite element method instead of a semi-discretization in space as done within the introduction of space-time POD in \cite{baumann18}.
\end{itemize}

\section{Introduction}
    \subsection{Problem formulation}
    Since decades, proper orthogonal decomposition (POD) has been a well established model order reduction (MOR) technique with applications including e.g.\ linear and nonlinear parabolic equations \cite{kunisch01}, optimal control of partial differential equations \cite{gubisch17, hinze08, kunisch99, arian00} or fluid dynamics \cite{hinze00, lassila14, lumley67} among various other disciplines. For a general introduction to POD and reduced order modeling, we refer to e.g.\ \cite{graessle21, gubisch17, kunisch01, pinnau08} and suggest \cite{banholzer24, kunisch02} as further reading. In order to reduce a dynamical system utilizing POD, one uses information (e.g.\ snapshots of the system's state over time) in order to compute a set of POD modes. Assuming that the solution trajectory is approximated well by the reduced space spanned by those POD basis vectors, a reduced model is constructed using a POD Galerkin ansatz and a projection onto the reduced space. Then, solving the surrogate model reduces computational time in comparison to the original high-dimensional problem. 
    
    In standard POD, the explicit time dependence plays a central role and can limit its applicability in certain cases. Typically, POD is applied for time-dependent PDEs by discretizing the spatial domain, resulting in a high-dimensional system of ordinary differential equations in time. However, some PDEs do not allow the derivation of such a system which can be solved with a time stepping procedure. Differential equations, which e.g.\ involve both initial and terminal conditions result in numerical schemes for which the application of standard POD becomes insufficient or is not applicable anymore. 

    Possible approaches to MOR simultaneously for space and time have been
    proposed such as proper generalized decompositions (PGD), space-time reduced
    bases, and space-time POD. Roughly spoken, PGD treats time, space and
    possibly parameters as individual dimensions of the problem and iteratively
    identifies low-dimensional bases, see e.g.\ \cite{ChiALK11}. Contrarily,
    the general method of reduced bases applies well to monolithic
    space-time formulations; cf.\ \cite{SteU12} for an early work and
    \cite{TenMD24} for recent results. An approach to a POD Galerkin approximation in space and time proposed in \cite{weiland2006} leads to a Sylvester equation for the expansion coefficients. For reference, we further mention the general approach
    of first applying a time-discretization scheme and consider the full but
    discretized time-dimension jointly with the space using e.g.,
    least-squares POD; see \cite{choi19}. For further developments which extend the
    scope of space-time model order reduction we refer to\ \cite{ballarin22, choi19, frame23, jin21, li26, schmidt19, yukiko21}.

    In this work, we consider space-time POD as it has originally been introduced in \cite{baumann15-poster, baumann18, baumann15} and motivated by the singular value decomposition of tensors \cite{DeDV00}. 
    In comparison to standard POD, space-time POD formalizes the projection onto
    a reduced space in standard POD and allows the projection of both time and
    space directions. In the original space-time POD works, the method has
    successfully been derived and implemented for prominent examples. The
    comparison between standard POD and space-time POD indicates, that
    space-time does not necessarily outperform standard POD, as long as
    time-stepping or discontinuous Galerkin schemes can be applied in the
    standard POD approach. This may change during the reduction of problems in
    which time is a global variable; consider, e.g.,\ finite-time control problems. 
    
    We make a contribution to the space-time POD framework by an a-priori
    analysis of the error which occurs in space-time POD between the solutions
    to the full order model (FOM) and the reduced order model (ROM). While the
    a-priori error estimation for space-time POD uses new methods rather than similar error estimations for standard POD, it is nevertheless deeply motivated by those works, above all \cite{banholzer24}. Let us also mention \cite{hinze05, kunisch01, kunisch02,singler2014} as further works about POD error estimations for an extended overview. 
    In our case, the FOM will be the Galerkin approximation of a linear parabolic evolution problem. In Theorem~\ref{thm:final_estimation}, we derive an a-priori error bound for the error
    \begin{equation}\label{eq:introduction_error_splitting}
            \Vert x - \hat x\Vert \leq \Vert x - \Pi x\Vert + \Vert \Pi x - \hat x\Vert =: \Vert \varrho\Vert + \Vert \vartheta\Vert,
    \end{equation}
    where $x$ denotes a solution to the FOM, $\Pi x$ the projection of the full order solution $x$ onto the reduced space and $\hat x$ a solution to the ROM. Splitting the error like this is a common technique, see e.g.\  \cite{hinze05, kunisch01, kunisch02, thomee06}, and allows to estimate $\Vert \varrho\Vert$  and $\Vert\vartheta\Vert$ individually. The $\varrho$-term measures the error occurring due to the projection onto a subspace and can be estimated using information about the subspace only. The $\vartheta$-term on the other hand, incorporates the error between the full order solution projected onto the reduced space and the reduced solution and thus depends on problem-specific information. 

    In this work, we focus on linear parabolic evolution problems and plan to investigate error analysis for other problems in future work. 

    We introduce the problem in the functional analytic context and consider a
    Gelfand triple $V\hookrightarrow H \cong H^* \hookrightarrow V^*$, where $V,
    H$ denote real separable Hilbert spaces. Let the dual pairing between $V^*$
    and $V$ be given as $\braket{u^*}{v}_{V^*, V} := u^*(v)$ for $v\in V$ and
    $u^*\in V^*$
defined in the dense and continuous embedding $V\hookrightarrow H$ as the
extension of the dual product in $H$, i.e. $\braket{u^*}{v}_{V^*, V} =
\braket{u^*}{v}_{H^*, H}$ if $u^* \in H^*$ and with $v\in V$ considered as $v\in
H$. Moreover, since we identify $H^*$ with $H$, it holds for the inner product in $H$ that $\braket{u}{v}_H =\braket{u}{v}_{H^*, H}$ for all $u,v\in H = H^*$. For $0 < T < \infty$, $f\in L^2(0,T; V^*)$ and a second order differential operator $\A(t)\in\L(V, V^*)$ we consider the evolution problem
\begin{equation}\label{eq:evolution_problem}
    \left\{
    \begin{array}{rll}
         x_t(t) + \A(t)x(t) &= f(t) \quad & \text{in }V^*, \text{ for a.e. }t\in (0,T),\\
         x(0) &= x_0 & \text{in }H.
    \end{array}
    \right.
\end{equation}
The operator $\A(t)$ is given via $\langle \A(t)\varphi,\psi\rangle_{V^*, V} :=
a(t;\varphi, \psi)$ for $\varphi,\psi\in V$ for a bilinear form $a(t;\cdot,
\cdot):V\times V\to \R$, such that $t\mapsto a(t; \varphi, \psi)$ is measurable
on $[0,T]$ for all $\varphi,\psi\in V$. Furthermore, we assume that the bilinear
form $a(t;\cdot, \cdot)$ is continuous and coercive uniformly with respect to $t$, i.e.\ there are constants $0 < \gamma <
\infty$ and $0 <\alpha < \infty$, such that
\begin{subequations}\label{eq:a_coercive_continuous}
  \begin{align}
    |a (t;\varphi, \psi)| &\leq \gamma \Vert \varphi\Vert_V\Vert \psi\Vert_V\quad &&\forall \varphi, \psi \in V,   \label{eq:a_coercive_continuous:a}\\
   a(t; \varphi, \varphi) &\geq \alpha \Vert \varphi \Vert_V^2\quad &&\forall \varphi \in V.     \label{eq:a_coercive_continuous:b}
  \end{align}
\end{subequations}
 %   \begin{equation}\label{eq:a_coercive_continuous}
 %      \begin{array}{rll}
 %           |a (t;\varphi, \psi)| &\leq \gamma \Vert \varphi\Vert_V\Vert \psi\Vert_V\quad &\forall \varphi, \psi \in V,\\
 %           a(t; \varphi, \varphi) &\geq \alpha \Vert \varphi \Vert_V^2\quad &\forall \varphi \in V.
 %       \end{array}
 %   \end{equation}
The evolution problem \eqref{eq:evolution_problem} can also be stated in its variational form: find $x \in W(0,T;V) := \{v \in L^2(0,T;V)\mid v_t\in L^2(0,T; V^*)\}$, such that
\begin{equation}\label{eq:evolution_problem_variational}
     \left\{
    \begin{array}{rll}
         \langle x_t(t), \varphi\rangle_{V^*, V} + a(t; x(t), \varphi) &= \langle f(t), \varphi\rangle_{V^*, V} \quad & \forall \varphi \in V, \text{ a.e. }t\in (0,T),\\
         x(0) &= x_0,  & \text{in }H.
    \end{array}
    \right.
\end{equation}
    
    \subsection{Paper organization}
    Since this work is deeply rooted in the context of space-time POD, we introduce the fundamental concepts and cite the main results for space-time POD which are needed for this work in Section~\ref{sec:space_time_pod}. Furthermore, we introduce the related notation for space-time POD and define the full order model as well as the reduced order model in this section.

    Section~\ref{sec:approximation_error} contains the analysis of the error between the FOM and the ROM $\Vert x - \hat x\Vert$, as well as the derivation of an a-priori error bound for this error. This section is divided into two parts, where each subsection is concerned with one of the terms in the splitting of the error \eqref{eq:introduction_error_splitting}. We start with the estimation of the $\varrho$-error term in Subsection~\ref{sec:rho-error}, continue with the estimation of the $\vartheta$-term in Subsection~\ref{sec:theta-error} and finally combine those estimates into an overall error bound in Subsection~\ref{sec:error-summary}.
    
    In Section~\ref{sec:numerical examples} we will execute the numerical analysis of our theoretical findings. We start with the application of space-time POD for given reduced dimensions and continue with the investigation of the error terms in dependence on varying choices for the reduced dimensions. Since the main focus of this section is the investigation of the error terms, we refer to \cite[Sections~3,4,5]{baumann18} for the technical implementation of space-time POD. 

\section{Application of space-time POD}\label{sec:space_time_pod}
In this section, we discuss the application of space-time POD to the parabolic evolution problem \eqref{eq:evolution_problem}. For this, we follow closely the notation and cite results from \cite{baumann18}, where space-time POD has been introduced. 

In Subsection~\ref{sec:space_time_pod_spaces_notation}, we introduce the notation and collect the results from space-time POD. This especially includes the product space structure on which space-time POD is built upon, as well as how to construct the reduced spaces. After the introduction of the product spaces, we introduce the full order model (FOM) and the reduced order model (ROM) in Subsection~\ref{sec:space_time_pod_fom_rom}. Since the initial condition requires special treatment in space-time POD, we devote Subsection~\ref{sec:space_time_pod_initial} to the discussion of the initial condition.

\subsection{Notation and product space structure}\label{sec:space_time_pod_spaces_notation}
The space-time POD framework is built in the context of finite-dimensional function spaces with a (tensor) product structure. We follow the notation in \cite{baumann18} and introduce the $s$-dimensional space $\S$ and $q$-dimensional space $\Y$ as
$$\S = \text{span}\{\psi_1,\ldots,\psi_s\}\subset L^2(0,T) \quad\text{and}\quad \Y = \text{span}\{\nu_1,\ldots,\nu_q\}\subset L^2(\Omega).$$
The $s\cdot q$-dimensional (tensor) product space $S\cdot \Y \subset L^2(0,T;L^2(\Omega))$ is the space spanned by the basis $\{\psi_j\cdot \nu_i\}^{j=1,\ldots,s}_{i=1,\ldots,q}$, such that each $x\in \S\cdot \Y$ can be written as
$$x(\tau, \xi) = \sum_{j=1}^s\sum_{i=1}^q \mathbf{x}_{i,j}\nu_i(\xi)\psi_j(\tau),$$
where $\mathbf{x}_{i,j}$ are entries of the coefficient matrix $\mathbf{X} = [\mathbf{x}_{i,j}]_{i=1,\ldots,q}^{j=1,\ldots,s} \in \mathbb{R}^{q \times s}$. We denote the columnwise vectorization of the coefficient matrix as
$$\mathbf{x} = [\mathbf{x}_{1,1},\ldots,\mathbf{x}_{q,1}, \mathbf{x}_{1,2},\ldots, \mathbf{x}_{q,2},\ldots, \mathbf{x}_{1,s},\ldots, \mathbf{x}_{q,s}]^T =: \mathrm{vec}(\mathbf{X}) \in \mathbb{R}^{qs}.$$
We introduce the $L^2$-inner products on $\S$ and $\Y$, respectively, as
$$(s_1,s_2)_{\S} = \int_0^Ts_1s_2\intd \tau\quad\text{and}\quad (y_1,y_2)_\Y = \int_\Omega y_1y_2\intd \xi.$$
The Gramians with respect to the basis in space $\{\nu_i\}_{i=1}^q$ and the basis in time $\{\psi_j\}_{j=1}^s$ can be introduced as 
$$ \M_\S := [(\psi_i, \psi_j)_\S]_{i=1,\ldots,s}^{j=1,\ldots,s}, \qquad \M_\Y := [(\nu_i, \nu_j)_\Y]_{i=1,\ldots,q}^{j=1,\ldots,q}$$
and we denote their Cholesky factorizations as $\M_\S = \mathbf{L}_\S\mathbf{L}_\S^T$ and $\M_\Y = \mathbf{L}_\Y\mathbf{L}_\Y^T$. In case of $\S\subseteq H^1(0,T)$, we define the matrix
$$\K_\S := [(\dot\psi_i, \dot\psi_j)_\S]_{i=1,\ldots,s}^{j=1,\ldots,s}$$
with a factorization $\K_\S = \mathbf{J}_\S\mathbf{J}_\S^T$. In case of $\Y \subseteq H^1(\Omega)$, we define
$$\K_\Y := [(\nabla\nu_i, \nabla\nu_j)_\Y]_{i=1,\ldots,q}^{j=1,\ldots,q}$$
with its factorization $\K_\Y = \mathbf{J}_\Y\mathbf{J}_\Y^T$. An inner product on $\S\cdot \Y$ is given as
$$(x_1,x_2)_{\S \cdot \Y} := \int_0^T\int_\Omega x_1  x_2\intd \xi \intd \tau$$
with the corresponding norm $\Vert x \Vert^2_{\S\cdot\Y} := (x,x)_{\S \cdot \Y}$. We can represent the norm in $\S\cdot \Y$ with respect to the Frobenius norm. The following lemma extends the results in \cite[Corollary~2.4]{baumann18}.\\

\begin{lemma}[Representation of $\S\cdot \Y$ norm]\label{lem:rewrite_x}
	Assume, that $\S\cdot \Y$ provides the necessary regularity, i.e.\ $\S
  \subseteq H^1(0,T)$ if the derivative in time occurs, as well as $\Y \subseteq
  H^1(\Omega)$ if the derivative in space occurs. Consider the factorizations of the Gramians $\M_\S = \mathbf L_\S\mathbf L_\S^T$, $\M_\Y = \mathbf L_\Y\mathbf L_\Y^T$, $\K_\S = \mathbf{J}_\S\mathbf{J}_\S^T$ and $\K_\Y = \mathbf{J}_\Y\mathbf{J}_\Y^T$. Then, for a given $x\in \S\cdot \Y$ with its coefficient matrix $\bf{X}$ and vector $\mathbf{x} = \mathrm{vec}(\mathbf{X})$ it holds that
	\begin{equation*}
		\begin{split}
			 \Vert x \Vert_{\S \cdot \Y}^2 &= \Vert \mathbf{x}\Vert^2_{\M_\S \otimes \M_\Y} = \Vert \mathbf{L}_\Y^T\mathbf{X}\mathbf{L}_\S\Vert_F^2,\\
			\Vert x_t \Vert_{\S \cdot \Y}^2 &= \Vert \mathbf{x}\Vert^2_{\K_\S \otimes \M_\Y} = \Vert \mathbf{L}_\Y^T\mathbf{X}\mathbf{J}_\S\Vert_F^2,\\
			\Vert \nabla x \Vert_{\S \cdot \Y}^2 &= \Vert \mathbf{x}\Vert^2_{\M_\S \otimes \K_\Y} = \Vert \mathbf{J}_\Y^T\mathbf{X}\mathbf{L}_\S\Vert_F^2.
		\end{split}
	\end{equation*}
\end{lemma}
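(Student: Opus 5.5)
The plan is to expand each norm in the tensor basis, reduce it to a Kronecker-product quadratic form, and then rewrite that form as a Frobenius norm.

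\textbf{Step 1 (the $\S\cdot\Y$ norm).} Insert $x=\sum_{j,i}\mathbf{x}_{i,j}\nu_i\psi_j$ into $(x,x)_{\S\cdot\Y}$ and use bilinearity. Fubini separates each double integral, giving
\[
\Vert x\Vert_{\S\cdot\Y}^2=\sum_{j,l=1}^s\sum_{i,k=1}^q \mathbf{x}_{i,j}\mathbf{x}_{k,l}\,(\psi_j,\psi_l)_\S(\nu_i,\nu_k)_\Y=\operatorname{tr}\bigl(\mathbf{X}^T\M_\Y\mathbf{X}\M_\S\bigr).
\]
Because of the columnwise vectorization, the index pair $(i,j)$ sits at position $(j-1)q+i$. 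Hence the same double sum equals $\mathbf{x}^T(\M_\S\otimes\M_\Y)\mathbf{x}=\Vert\mathbf{x}\Vert^2_{\M_\S\otimes\M_\Y}$. This is also the standard identity $\mathrm{vec}(\mathbf{X})^T(\mathbf{B}\otimes\mathbf{A})\mathrm{vec}(\mathbf{X})=\operatorname{tr}(\mathbf{X}^T\mathbf{A}\mathbf{X}\mathbf{B})$ for symmetric $\mathbf{B}$.

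\textbf{Step 2 (Frobenius form).} Substitute the factorizations and use cyclicity of the trace:
\[
\operatorname{tr}\bigl(\mathbf{X}^T\mathbf{L}_\Y\mathbf{L}_\Y^T\mathbf{X}\mathbf{L}_\S\mathbf{L}_\S^T\bigr)=\operatorname{tr}\bigl((\mathbf{L}_\Y^T\mathbf{X}\mathbf{L}_\S)^T(\mathbf{L}_\Y^T\mathbf{X}\mathbf{L}_\S)\bigr)=\Vert\mathbf{L}_\Y^T\mathbf{X}\mathbf{L}_\S\Vert_F^2 .
\]
This recovers \cite[Corollary~2.4]{baumann18}.

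\textbf{Step 3 (derivative terms).} Under $\S\subseteq H^1(0,T)$ we have $x_t=\sum\mathbf{x}_{i,j}\nu_i\dot\psi_j$, with the same coefficient matrix. Repeating Steps 1 and 2 replaces the Gramian $(\psi_j,\psi_l)_\S$ by $(\dot\psi_j,\dot\psi_l)_\S=(\K_\S)_{j,l}$, so $\M_\S$ becomes $\K_\S$ and $\mathbf{L}_\S$ becomes $\mathbf{J}_\S$. Similarly, under $\Y\subseteq H^1(\Omega)$ we have $\nabla x=\sum\mathbf{x}_{i,j}\nabla\nu_i\,\psi_j$. The inner product is taken componentwise, i.e.\ $\int\!\!\int \nabla x\cdot\nabla x$. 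The spatial factor then becomes $(\nabla\nu_i,\nabla\nu_k)_\Y=(\K_\Y)_{i,k}$, so $\M_\Y$ is replaced by $\K_\Y$ and $\mathbf{L}_\Y$ by $\mathbf{J}_\Y$.

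\textbf{Main obstacle.} There is no genuine difficulty; the work is bookkeeping. The points needing care are:
\begin{itemize}
\item checking that the Kronecker ordering $\M_\S\otimes\M_\Y$ (time factor outside) matches the columnwise $\mathrm{vec}$;
\item noting that $\K_\S$ and $\K_\Y$ are only positive semidefinite. A factorization $\mathbf{J}\mathbf{J}^T$ still exists (for instance, a symmetric square root), and the trace argument needs only $\K=\mathbf{J}\mathbf{J}^T$, not invertibility;
\item interpreting $\nabla x$ in the vector-valued $L^2$ sense.
\end{itemize}
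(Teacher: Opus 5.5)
Your proposal is correct and follows essentially the same route as the paper. Both arguments reduce each norm to a Kronecker quadratic form such as $\mathbf{x}^T(\M_\S\otimes\M_\Y)\mathbf{x}$ (with $\K_\S$ or $\K_\Y$ swapped in for the derivative cases) and then factor the Gramians to reach a Frobenius norm; where you use trace cyclicity, the paper uses the mixed-product rule together with $(\mathbf{L}_\S^T\otimes\mathbf{L}_\Y^T)\mathrm{vec}(\mathbf{X})=\mathrm{vec}(\mathbf{L}_\Y^T\mathbf{X}\mathbf{L}_\S)$, and you additionally carry out the index bookkeeping the paper calls ``simple computations.''
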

\begin{proof}
    The proof is based on the proof of \cite[Corollary~2.4]{baumann18}, where the formula for $\Vert x \Vert^2_{\S\cdot \Y}$ has been shown. Consider the differential operator or the identity $D \in\{\text{id}, \partial_t, \nabla\}$ and let
    $$\mathbf \Gamma_\S = \begin{cases}
    	\M_\S & \text{if } D \in \{\text{id}, \nabla\},\\
	\K_\S & \text{if } D =\partial_t,
    \end{cases} \qquad 
    \mathbf \Gamma_\Y = \begin{cases}
    	\M_\Y & \text{if } D \in \{\text{id}, \partial_t\},\\
	\K_\Y & \text{if } D =\nabla,
    \end{cases}$$
    as well as
    $$
    \mathbf \Lambda_\S = \begin{cases}
    	\mathbf L_\S & \text{if } D \in \{\text{id}, \nabla\},\\
	\mathbf J_\S & \text{if } D = \partial_t,
    \end{cases}, \qquad 
    \mathbf \Lambda_\Y = \begin{cases}
    	\mathbf L_\Y & \text{if } D \in \{\text{id}, \partial_t\},\\
	\mathbf J_\Y & \text{if } D = \nabla.
    \end{cases}
    $$
    Then, simple computations show that
    $$\Vert D x \Vert^2_{\S\cdot\Y} = \int_0^T\int_\Omega (D x)^2\intd x\intd t = \mathbf{x}^T (\mathbf \Gamma_\S \otimes \mathbf \Gamma_\Y)\mathbf{x} = \Vert \mathbf{x}\Vert^2_{\mathbf \Gamma_\S \otimes \mathbf \Gamma_\Y}.$$
    Furthermore, it follows from basic properties and relations between the Kronecker product, the vectorization operator and the Frobenius norm that
    \begin{equation*}
        \begin{split}
            \Vert \mathbf{x}\Vert^2_{\mathbf \Gamma_\S \otimes \mathbf \Gamma_\Y} &= \mathbf{x}^T (\mathbf \Gamma_\S \otimes \mathbf \Gamma_\Y)\mathbf{x} = \mathbf{x}^T (\mathbf \Lambda_\S \otimes \mathbf \Lambda_\Y)(\mathbf \Lambda_\S^T \otimes \mathbf \Lambda_\Y^T)\mathbf{x}\\
            &= \Vert (\mathbf \Lambda_\S^T \otimes \mathbf \Lambda_\Y^T)\mathbf{x}\Vert^2_2 = \Vert\mathrm{vec}(\mathbf \Lambda_\Y^T \mathbf{X}\mathbf \Lambda_\S)\Vert_2^2 = \Vert \mathbf \Lambda_\Y^T \mathbf{X}\mathbf \Lambda_\S\Vert_F^2.
        \end{split}
    \end{equation*}
\end{proof}

Now, let $\hat \S \subseteq \S$ be an $\hat s$-dimensional subspace which is spanned by the reduced basis $\{\hat \psi_j\}_{j=1,\ldots,\hat s}$ and $\hat \Y \subseteq \Y$ be a $\hat q$-dimensional subspace, which is spanned by the reduced basis $\{\hat \nu_i\}_{i=1,\ldots,\hat q}$. Then, the reduced product space $\hat \S\cdot\hat\Y$ is spanned by the basis $\{\hat\psi_j \cdot \hat \nu_i\}_{i=1,\ldots,\hat q}^{j=1,\ldots,\hat s}$. We use the following two Lemmas from \cite{baumann18} to determine the reduced basis functions $\{\hat \nu_i\}_{i=1}^{\hat q}$ and $\{\hat \psi_j\}_{j=1}^{\hat s}$ for the reduced space $\hat \S \cdot \hat \Y$ from given bases $\{ \nu_i\}_{i=1}^{ q} \subset \Y$ and $\{ \psi_j\}_{j=1}^{ s}\subset \S$ in an optimal way with respect to the $\| \cdot \|_{\S \cdot \Y}$-norm. The reduced basis in space $\{\nu_i\}_{i=1}^q$ can be computed using \cite[Lemma 2.5]{baumann18}. The lemma reads as:\\

\begin{lemma}[Optimal low-rank bases in space {\cite[Lemma 2.5]{baumann18}}]\label{lem:space-basis}
    Given $x\in \S\cdot \Y$ and the associated matrix of coefficients $\mathbf{X}$. The best-approximating $\hat q$-dimensional subspace $\hat \Y$ in the sense that the projection error $\Vert x - \Pi_{\S\cdot \hat \Y}x \Vert_{\S\cdot \Y}$ is minimal over all subspaces of $\Y$ of dimension $\hat q$ is given as $\text{span}\{\hat \nu_i\}_{i=1,\ldots,\hat q}$, where 
    \begin{equation}\label{eq:space_equation}
        \begin{bmatrix}
            \hat \nu_1\\\hat\nu_2\\\vdots\\\hat\nu_{\hat q}
        \end{bmatrix}
         = V_{\hat q}^T \mathbf{L}_{\Y}^{-1}
         \begin{bmatrix}
             \nu_1\\
             \nu_2\\
             \vdots\\
             \nu_q
         \end{bmatrix},
    \end{equation}
    where $V_{\hat q}$ is the matrix of the $\hat q$ leading left singular vectors of 
    $$\mathbf{L}_\Y^T\mathbf{X}\mathbf{L}_\S.$$
\end{lemma}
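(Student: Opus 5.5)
The plan is to reduce the continuous best-approximation problem to a matrix low-rank approximation problem via Lemma~\ref{lem:rewrite_x}, and then invoke the Eckart--Young--Mirsky theorem.

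\emph{Step 1: change of basis in space.} Introduce the transformed basis $\tilde\nu := \mathbf L_\Y^{-1}\nu$, where $\nu = [\nu_1,\ldots,\nu_q]^T$. Its Gramian is $\mathbf L_\Y^{-1}\M_\Y\mathbf L_\Y^{-T} = I$, so $\{\tilde\nu_i\}$ is $L^2(\Omega)$-orthonormal. Since $x = \nu^T\mathbf X\psi$, we can write $x = \tilde\nu^T\tilde{\mathbf X}\psi$ with $\tilde{\mathbf X} = \mathbf L_\Y^T\mathbf X$. Any $\hat q$-dimensional subspace $\hat\Y\subseteq\Y$ has an orthonormal basis of the form $W^T\tilde\nu$ with $W\in\R^{q\times\hat q}$ and $W^TW=I$. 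Conversely, every such $W$ defines a subspace of dimension $\hat q$. This parametrizes all admissible subspaces.

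\emph{Step 2: compute the projection.} The $\S\cdot\Y$-orthogonal projection onto $\S\cdot\hat\Y$ acts pointwise in time as the $L^2(\Omega)$-orthogonal projection onto $\hat\Y$. Indeed, the residual $x-\Pi x$ is then orthogonal to every $\psi_j\hat\nu_i$, by integrating in $\tau$. In the orthonormal spatial coordinates this projection is $WW^T$, so $\Pi_{\S\cdot\hat\Y}x = \tilde\nu^T WW^T\tilde{\mathbf X}\psi$. Applying Lemma~\ref{lem:rewrite_x} with the spatial Cholesky factor now equal to $I$ gives
\[
\Vert x-\Pi_{\S\cdot\hat\Y}x\Vert_{\S\cdot\Y}^2 = \Vert (I-WW^T)\mathbf L_\Y^T\mathbf X\mathbf L_\S\Vert_F^2 .
\]

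\emph{Step 3: optimize.} Minimizing the right-hand side over orthonormal $W$ is minimizing $\Vert B - P B\Vert_F$ over rank-$\hat q$ orthogonal projectors $P$, where $B=\mathbf L_\Y^T\mathbf X\mathbf L_\S$. Since $PB$ has rank at most $\hat q$, the Eckart--Young--Mirsky theorem gives the lower bound $\sum_{k>\hat q}\sigma_k^2$. This bound is attained for $W=V_{\hat q}$, the leading left singular vectors of $B$. Translating back, the optimal basis is $V_{\hat q}^T\tilde\nu = V_{\hat q}^T\mathbf L_\Y^{-1}\nu$, which is exactly \eqref{eq:space_equation}.

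\emph{Main obstacle.} The delicate step is Step~2: justifying that the $\S\cdot\Y$-projection is the spatial projection applied columnwise, and carefully tracking the transposes and Cholesky factors. Rather than citing the identity, I would verify it directly through the normal equations in the Kronecker form $\M_\S\otimes\M_\Y$.
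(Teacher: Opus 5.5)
Your proof is correct and takes the same route as the argument from \cite{baumann18} that the paper cites without reproducing: orthonormalize the spatial basis via $\mathbf L_\Y$, identify the projection coefficients as $\mathbf L_\Y^{-T}V_{\hat q}V_{\hat q}^T\mathbf L_\Y^T\mathbf X$ (your $WW^T$ in the $\tilde\nu$ coordinates), rewrite the error via Lemma~\ref{lem:rewrite_x} as $\Vert (I-WW^T)\mathbf L_\Y^T\mathbf X\mathbf L_\S\Vert_F$, and conclude with the Eckart--Young--Mirsky theorem. This same coefficient representation and truncated-SVD computation reappear in the paper's proof of Proposition~\ref{prop:rho_error}.
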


The proof can be found in \cite{baumann18}. It treats the reduction in space direction. A reduced basis in time direction can be found by virtue of \cite[Lemma 2.7]{baumann18}, which we quote as:\\

\begin{lemma}[Optimal low-rank bases in time {\cite[Lemma 2.7]{baumann18}}]\label{lem:time-basis}
    Given $x\in \S\cdot \Y$ and the associated matrix of coefficients $\mathbf{X}$. The best-approximating $\hat s$-dimensional subspace $\hat \S$ in the sense that the projection error $\Vert x - \Pi_{\hat S\cdot \Y}x \Vert_{\S\cdot \Y}$ is minimal over all subspaces of $\S$ of dimension $\hat s$ is given as $\text{span}\{\hat \psi_j\}_{j=1,\ldots,\hat s}$, where 
    \begin{equation}\label{eq:time_equation}
        \begin{bmatrix}
            \hat \psi_1\\\hat\psi_2\\\vdots\\\hat\psi_{\hat s}
        \end{bmatrix}
         = U_{\hat s}^T \mathbf{L}_{\S}^{-1}
         \begin{bmatrix}
             \psi_1\\
             \psi_2\\
             \vdots\\
             \psi_s
         \end{bmatrix},
    \end{equation}
    where $U_{\hat s}$ is the matrix of the $\hat s$ leading right singular vectors of
    $$\mathbf{L}_\Y^T\mathbf{X}\mathbf{L}_\S.$$
\end{lemma}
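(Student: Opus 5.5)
We reduce the statement to the Eckart--Young--Mirsky theorem applied to the weighted coefficient matrix $\tilde{\mathbf X} := \mathbf L_\Y^T\mathbf X\mathbf L_\S$, in direct analogy to the space version in Lemma~\ref{lem:space-basis}. The plan has four steps.

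\textbf{Step 1: coordinates for time subspaces.} Fix an arbitrary $\hat s$-dimensional subspace $\hat\S\subseteq\S$. Write its basis functions as $[\hat\psi_1,\ldots,\hat\psi_{\hat s}]^T = W^T\mathbf L_\S^{-1}[\psi_1,\ldots,\psi_s]^T$ with some $W\in\R^{s\times\hat s}$. We may choose the basis $\S$-orthonormal. The Gramian of this basis is
$$W^T\mathbf L_\S^{-1}\M_\S\mathbf L_\S^{-T}W = W^TW,$$
so orthonormality is equivalent to $W^TW = \mathbf I$. Hence $\hat s$-dimensional subspaces of $\S$ correspond exactly to matrices $W$ with orthonormal columns, modulo right multiplication by orthogonal matrices.

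\textbf{Step 2: the projection in coordinates.} The $\S\cdot\Y$-orthogonal projection onto $\hat\S\cdot\Y$ acts only in the time variable. Let $P$ be the $\M_\S$-orthogonal projector in $\R^s$ onto the coefficient span $\mathrm{range}(\mathbf L_\S^{-T}W)$. Then $\Pi_{\hat\S\cdot\Y}x$ has coefficient matrix $\mathbf X P^T$ with
$$P = \mathbf L_\S^{-T}WW^T\mathbf L_\S^T.$$
To verify this, check that $P$ is idempotent, that its range is correct, and that $\M_\S P = P^T\M_\S$. In addition, the residual must be orthogonal to every $\hat\psi_j\nu_i$. This follows from the Kronecker inner-product formula $(x_1,x_2)_{\S\cdot\Y} = \mathbf x_1^T(\M_\S\otimes\M_\Y)\mathbf x_2$ used in the proof of Lemma~\ref{lem:rewrite_x}.

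\textbf{Step 3: the error as a Frobenius norm.} By Lemma~\ref{lem:rewrite_x},
$$\Vert x-\Pi_{\hat\S\cdot\Y}x\Vert^2_{\S\cdot\Y} = \Vert \mathbf L_\Y^T\mathbf X(\mathbf I-P^T)\mathbf L_\S\Vert_F^2.$$
Since $P^T\mathbf L_\S = \mathbf L_\S WW^T$, we have $(\mathbf I-P^T)\mathbf L_\S = \mathbf L_\S(\mathbf I-WW^T)$. The error therefore equals
$$\Vert\tilde{\mathbf X}(\mathbf I-WW^T)\Vert_F^2 = \Vert\tilde{\mathbf X}\Vert_F^2-\Vert\tilde{\mathbf X}W\Vert_F^2 .$$
Checking this identity for $P$ and the exact transposes is the step I expect to need the most care, since the argument depends on it.

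\textbf{Step 4: optimisation.} Minimising the error over all $W$ with orthonormal columns is the same as maximising $\Vert\tilde{\mathbf X}W\Vert_F^2 = \mathrm{tr}(W^T\tilde{\mathbf X}^T\tilde{\mathbf X}W)$. By Ky Fan's maximum principle (equivalently, Eckart--Young--Mirsky, since $\tilde{\mathbf X}WW^T$ is a rank-$\hat s$ approximation of $\tilde{\mathbf X}$), the maximum is attained by the $\hat s$ leading eigenvectors of $\tilde{\mathbf X}^T\tilde{\mathbf X}$. These are exactly the leading right singular vectors $U_{\hat s}$ of $\tilde{\mathbf X}$. Taking $W = U_{\hat s}$ gives \eqref{eq:time_equation}.

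Alternatively, transposing roles reduces the statement to Lemma~\ref{lem:space-basis}. The swap $\mathbf X\mapsto\mathbf X^T$ exchanges $\S$ and $\Y$, and $\tilde{\mathbf X}^T$ has left singular vectors equal to the right singular vectors of $\tilde{\mathbf X}$.
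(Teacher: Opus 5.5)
Your proof is correct, and it follows essentially the argument the paper relies on. The paper itself only quotes the lemma from \cite{baumann18}, but it uses the same mechanics elsewhere: the projection has coefficient matrix $\mathbf X\mathbf L_\S WW^T\mathbf L_\S^{-1}$ (the time analogue of \eqref{eq:similar_representation} and of the formula in Lemma~\ref{lem:non-orthogonal-coeff-mat-connection}), Lemma~\ref{lem:rewrite_x} turns the error into $\Vert\mathbf L_\Y^T\mathbf X\mathbf L_\S(\mathbf I-WW^T)\Vert_F$, and SVD optimality (Eckart--Young / Ky Fan) then selects $W=U_{\hat s}$, with your identities for $P$, $P^T\mathbf L_\S=\mathbf L_\S WW^T$ and the Gramian $W^TW$ all checking out.
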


The consideration of a projected solution $\hat x\in \hat \S \cdot \hat \Y$ requires the consecutive application of the two projections described by Lemma~\ref{lem:space-basis} and Lemma~\ref{lem:time-basis}. We denote
\begin{equation}\label{eq:two_projections}
    \mathcal P_{\hat \Y\to \hat \S} = \Pi_{\hat \S \cdot \hat \Y}\circ \Pi_{\S \cdot \hat \Y}\quad\text{and}\quad\mathcal P_{\hat \S\to \hat \Y} = \Pi_{\hat \S \cdot \hat \Y}\circ \Pi_{\hat\S \cdot \Y}.
\end{equation}
Hence, $\mathcal P_{\hat \Y\to \hat \S}$ denotes the projection of a function onto $\S\cdot\hat \Y$, followed by the projection onto $\hat \S\cdot\hat \Y$, while $\mathcal P_{\hat \S\to \hat \Y}$ denotes the projection onto $\hat \S \cdot \Y$ followed by the projection onto $\hat \S\cdot\hat \Y$. If we aim to project onto $\hat \S \cdot \hat \Y$, we have to choose between using $\mathcal P_{\hat \Y\to \hat \S}$ or $\mathcal P_{\hat \S\to \hat \Y}$.

\subsection{Full order model and reduced order model}\label{sec:space_time_pod_fom_rom}
Let us consider a space-time variational formulation of the evolution problem \eqref{eq:evolution_problem} and introduce the bilinear form $B:W(0,T;V)\times L^2(0,T; V)\to\R$ and linear form $L: L^2(0,T; V) \to \R$ as
\begin{equation}\label{eq:bilinear_linear_form}
    \begin{split}
        B(x, \varphi) &:= \int_0^T \langle x_t(t), \varphi(t)\rangle_{V^*,V} + a(t, x(t), \varphi(t))\intd t,\\
        L(\varphi) & := \int_0^T \langle f(t), \varphi(t)\rangle_{V^*, V}\intd t.
    \end{split}
\end{equation}
The problem in space-time is then given as: find $x \in W(0,T;V)$, such that
\begin{equation}\label{eq:space_time_variational}
    \left\{
    \begin{array}{rll}
        B(x, \varphi) &= L(\varphi),\quad &\forall \varphi\in L^2(0,T; V),\\
        x(0) &= x_0, & \text{in }H.
    \end{array}
    \right.
\end{equation} 

The space-time variational formulation \eqref{eq:space_time_variational} is equivalent to the variational formulation \eqref{eq:evolution_problem_variational} as it has been shown in \cite[Theorem~1.33]{hinze-ulbrich}, i.e.\ we have the following.\\

\begin{theorem}[{Equivalence of formulations, \cite[Theorem~1.33]{hinze-ulbrich}}]
    The variational formulations \eqref{eq:evolution_problem_variational} and \eqref{eq:space_time_variational} are equivalent.\\
\end{theorem}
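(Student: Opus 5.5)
We have to show that $x\in W(0,T;V)$ solves \eqref{eq:evolution_problem_variational} if and only if it solves \eqref{eq:space_time_variational}. The initial condition is the same in both formulations and is meaningful because of the continuous embedding $W(0,T;V)\hookrightarrow C([0,T];H)$. It therefore suffices to show that the pointwise-in-time equation holds for almost every $t$ exactly when $B(x,\varphi)=L(\varphi)$ for all $\varphi\in L^2(0,T;V)$.

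\emph{From pointwise to space-time.} Let $x$ solve \eqref{eq:evolution_problem_variational} and take $\varphi\in L^2(0,T;V)$. For a.e.\ $t$ we test with $\varphi(t)\in V$ and obtain
\[
\langle x_t(t)+\A(t)x(t)-f(t),\varphi(t)\rangle_{V^*,V}=0 .
\]
The integrand is integrable. Indeed, $x_t,f\in L^2(0,T;V^*)$, and $\A(\cdot)x(\cdot)\in L^2(0,T;V^*)$ because the map $t\mapsto a(t;\cdot,\cdot)$ is measurable and the uniform bound \eqref{eq:a_coercive_continuous:a} gives $\|\A(t)x(t)\|_{V^*}\le\gamma\|x(t)\|_V$. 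Integrating over $(0,T)$ then yields $B(x,\varphi)=L(\varphi)$.

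\emph{From space-time to pointwise.} Set $r:=x_t+\A(\cdot)x-f\in L^2(0,T;V^*)$; by assumption $\int_0^T\langle r(t),\varphi(t)\rangle_{V^*,V}\intd t=0$ for all $\varphi\in L^2(0,T;V)$. Fix $v\in V$ and a measurable set $E\subseteq(0,T)$, and choose $\varphi=\chi_E v$. This gives $\int_E\langle r(t),v\rangle\intd t=0$ for every such $E$, so $\langle r(t),v\rangle=0$ for all $t$ outside a null set $N_v$.

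The main obstacle is that $N_v$ depends on $v$, while the statement requires a single null set that works for all test functions. This is where separability of $V$ is used. Take a countable dense subset $\{v_k\}\subset V$ and let $N=\bigcup_k N_{v_k}$, which is again a null set. For $t\notin N$ we have $\langle r(t),v_k\rangle=0$ for all $k$. Since $r(t)\in V^*$ is continuous on $V$, density gives $\langle r(t),v\rangle=0$ for every $v\in V$.

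Hence \eqref{eq:evolution_problem_variational} holds for a.e.\ $t$, and the two formulations are equivalent.
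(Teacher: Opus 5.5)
Your proof is correct. The paper gives no argument of its own here; it only cites \cite[Theorem~1.33]{hinze-ulbrich}. What you wrote is the standard argument behind that result: integrate the pointwise identity for one direction, and for the other localize with $\varphi=\chi_E v$ and use separability of $V$ to pass from $v$-dependent null sets to a single one.

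The measurability of $t\mapsto a(t;x(t),\varphi(t))$ is compressed in your write-up but not wrong. It follows by approximating $x$ and $\varphi$ by simple functions and passing to the limit with the uniform bound $\gamma$.

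In this Hilbert setting the reverse direction can also be shortened. Test with $\varphi:=R_V^{-1}r\in L^2(0,T;V)$, where $R_V\colon V\to V^*$ is the Riesz isomorphism. This gives $\int_0^T\Vert r(t)\Vert_{V^*}^2\intd t=0$ at once and removes the countable-dense-set step; separability then enters only through the Bochner measurability of $r$.
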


For $s$-dimensional $\S \subset H^1(0,T)$ and $q$-dimensional $\Y \subset H^1_0(\Omega)$, we consider the Galerkin approximation of \eqref{eq:space_time_variational} onto the product space $\S\cdot \Y$. We say that $x\in\S\cdot\Y \subset W(0,T;H^1_0(\Omega))$ is a solution to the FOM, if it satisfies
\begin{equation}\label{eq:fom}
    \left\{
    \begin{split}
         B(x, \nu_i\psi_j) &= L(\nu_i\psi_j),\\
         x(0)&=  \Pi_\Y x_0
    \end{split}
    \right.
\end{equation}
for all $j=1,\ldots,s$, $i=1,\ldots,q$ and $x_0 \in H^1_0(\Omega)$. For $\hat s \leq s$ and $\hat q \leq q$, we consider the $\hat s$-dimensional subspace $\hat \S \subseteq \S$ and the $\hat q$-dimensional subspace $\hat \Y \subset \hat \Y$. We call $\hat x \in \hat \S \cdot \hat \Y$ a solution to the ROM, if it satisfies
\begin{equation}\label{eq:rom}
    \left\{
    \begin{split}
         B(\hat x, \hat \nu_i\hat \psi_j) &= L(\hat \nu_i\hat \psi_j),\\
         \hat x(0)&= \Pi_{\hat \Y} x_0
    \end{split}
    \right.
\end{equation}
for all $i=1,\ldots,\hat q$ and $j=1,\ldots,\hat s$. 

\subsection{Incorporation of the initial condition}\label{sec:space_time_pod_initial}
In the proposed error analysis for the reduced order solution $\hat x$, we will
rely on the assumption that $\hat x(0) = \Pi_\Y x_0$, i.e., that the initial condition has not been reduced but only projected to the truth finite element space $\Y$. Because of linearity of the overall problem, a
nonzero initial error can be added by superposition arguments. However,
this will increase the complexity of the analysis without providing additional practical and theoretical advantages. Moreover, as we will lay out below, a nonzero initial error can also be treated by straight-forward adaptation of the bases; cf.\ the discussion in \cite[Section~4.2]{baumann18}.

If we choose $\{\hat \psi_j\}_{j=1}^{\hat s}$ during the projection of $\S$ onto $\hat\S$ as described in Section~\ref{sec:space_time_pod_spaces_notation}, i.e.\ as $\hat \S = \text{span}\{\hat \psi_1,\ldots,\hat \psi_{\hat s}\}$ based on the full order basis $\{\psi_j\}_{j=1}^{s}$ as computed in Lemma~\ref{lem:time-basis}, we don't have any a-priori knowledge about the basis functions $\{\hat \psi_j\}_{j=1}^{\hat s}$. Hence, we cannot guarantee that the initial condition $\hat x(0) = \Pi_\Y x_0$ will (and even can) be satisfied. The following steps describe how the initial condition can be implemented instead:
\begin{itemize}
    \item[1.] We choose $\S = \text{span}\{\psi_1,\ldots,\psi_s\} \subset H^1(0,T)$, where $\{\psi_j\}_{j=1}^s$ is a nodal basis, such that $\psi_1$ is the node associated with the function values for $t=0$.

    \item[2.] Set the first column of the matrix of coefficients in $\mathbf{X}$ to zero in order to get the matrix $\accentset{\circ}{\mathbf{X}}$. We use this matrix in order to compute $\accentset{\circ}{U}_{\hat s-1}$ as the matrix of the $\hat s-1$ leading right singular vectors of $\mathbf{L}_\Y^T\accentset{\circ}{\mathbf{X}}\mathbf{L}_\S$.

    \item[3.] We set 
    \begin{equation}\label{eq:different_U}
    U_{\hat s} = \begin{bmatrix}
        \mid & \\
        (\mathbf{L}_\S^T)_{:,1} & \accentset{\circ}{U}_{\hat s-1}\\
        \mid & \\
    \end{bmatrix},
    \end{equation}
    where $(\mathbf{L}_\S^T)_{:,1}$ denotes the first column of $\mathbf{L}_\S^T$. Then, we compute the reduced basis as in Lemma~\ref{lem:time-basis} as 
    \begin{equation}\label{eq:set_red_basis_different}
        \begin{bmatrix}
            \hat \psi_1\\\hat\psi_2\\\vdots\\\hat\psi_{\hat s}
        \end{bmatrix}
         = U_{\hat s}^T \mathbf{L}_{\S}^{-1}
         \begin{bmatrix}
             \psi_1\\
             \psi_2\\
             \vdots\\
             \psi_s
         \end{bmatrix},
    \end{equation}
    
\end{itemize}
    Under the assumption that $\mathbf L_\S$ is an upper-triangular factor, this construction ensures that $\hat \psi_1 = \psi_1$ and that only $\hat \psi_1$ is associated with the value $\hat x(0)$ since it holds that $\hat \psi_2(0) = \cdots=\hat\psi_{\hat s}(0) = 0$, while the trajectory for $t>0$ still gets optimally approximated by $\hat \psi_2,\ldots,\hat \psi_{\hat s}$. Notice that the choice of the first column of $U_{\hat s}$ as the first column of $\mathbf{L}_\S^T$ differs from the procedure in \cite[Section~4.2]{baumann18}. This is required in order to get $\hat \psi_1 = \psi_1$. We denote
    $$\mathscr{S}\cdot \Y = \left\{ x \in \S\cdot \Y\;\middle|\;  x(0) = \Pi_\Y x_0\right\}$$
    as well as
    $$\hat {\mathscr{S}}\cdot \Y = \left\{\hat x \in \hat{\S}\cdot \Y\;\middle|\; \hat x(0) = \Pi_\Y x_0\right\},$$
    where we have computed the basis by which $\hat\S$ is spanned as described above. Notice that this construction implies for $x\in \mathscr{S}\cdot \Y$ and $\hat x \in \hat{\mathscr{S}}\cdot \Y$, that 
    \begin{equation}\label{eq:teta_zero_zero}
    x(0) - \hat{x}(0) = \Pi_{\Y} x_0 - \Pi_{\Y} x_0 = 0
    \end{equation}

    \begin{remark}[Only time reduction in this section]\label{rem:simple-notation}
        In this section, we have introduced the initial condition with respect to the space $\S\cdot \Y$. Hence, this is the case in which the time dimension is treated before the space reduction. The treatment of the initial condition works exactly the same way when we project the space dimension first, i.e.\ for $\S\cdot \hat \Y$. Especially the sets $\mathscr{S}\cdot \hat \Y$ and $\hat{\mathscr{S}}\cdot \hat \Y$ are defined similar as described in this section. We define the projections $\mathcal P_{\hat \Y\to\hat{\mathscr{S}}}$ and $\mathcal P_{\hat{\mathscr{S}}\to\hat\Y}$ similar to the definition of $\mathcal P_{\hat \Y\to\hat{\S}}$ and $\mathcal P_{\hat{\S}\to\hat\Y}$ in \eqref{eq:two_projections}.\\
    \end{remark}

    \begin{remark}[Orthogonality]\label{rem:not_orthogonal}
        Notice, that a reduced time basis, which has been computed directly with Lemma~\ref{lem:time-basis} is orthogonal, see \cite[Remark~2.6]{baumann18}. In contrast, the reduced basis $\{\hat \psi_j\}_{j=1}^{\hat s}$ constructed by \eqref{eq:set_red_basis_different} is not orthogonal anymore since
        \begin{equation}\label{eq:red_basis_not_ortho}
            \begin{split}
                \M_{\hat{\mathscr{S}}} &= \int_0^T \begin{bmatrix}
                \hat \psi_1(\tau)\\\vdots\\\hat \psi_{\hat s}(\tau)
            \end{bmatrix}\begin{bmatrix}
                \hat \psi_1(\tau)&\ldots&\hat \psi_{\hat s}(\tau)
            \end{bmatrix}\intd \tau \\
            &= U_{\hat s}^T\mathbf{L}_{\S}^{-1} \int_0^T \begin{bmatrix}
                 \psi_1(\tau)\\\vdots\\ \psi_{\hat s}(\tau)
            \end{bmatrix}\begin{bmatrix}
                \psi_1(\tau)&\ldots&\psi_{\hat s}(\tau)
            \end{bmatrix}\intd \tau \mathbf{L}_\S^{-T}U_{\hat s} 
            = U_{\hat s}^T\mathbf{L}_{\S}^{-1}\M_\S \mathbf{L}_\S^{-T}U_{\hat s}\\
            &= U_{\hat s}^T U_{\hat s}  \neq I_{\hat s},
            \end{split}
        \end{equation}
         where $(\mathbf{L}_\S)_{1,:}$ denotes the first row of $\mathbf{L}_\S$ and $I_{\hat s}$ the $\hat s$-dimensional identity. The missing orthogonality must be minded during the derivation of the error bounds in Section~\ref{sec:rho-error} as well as the computation of the reduced solution during the numerical examples in Section~\ref{sec:numerical examples}.
         %\left[\begin{matrix}
         %       (\mathbf{L}_\S)_{1,:}(\mathbf{L}_\S^T)_{:,1}& \begin{matrix}
         %           (\mathbf{L}_\S)_{1,:}\accentset{\circ}{u}_1 &  \cdots & (\mathbf{L}_\S)_{1,:}\accentset{\circ}{u}_{\hat s - 1}
         %       \end{matrix}\\
         %       \begin{matrix}
         %       \accentset{\circ}{u}_1^T(\mathbf{L}_\S^T)_{:,1} \\ \vdots\\ \accentset{\circ}{u}_{\hat s - 1}^T(\mathbf{L}_\S^T)_{:,1}
         %       \end{matrix}&
         %       \tikz[anchor=base, baseline]{\node[draw, minimum width=3.5cm, minimum height=1.3cm] {$I_{\hat s - 1}$}}
         %   \end{matrix}\right]
    \end{remark}

\section{Analysis of the approximation error}\label{sec:approximation_error}
In this section, we estimate the approximation error by
\begin{equation}\label{eq:error}
    \Vert x - \hat x\Vert_{\S\cdot \Y} \leq \Vert x - \Proj x \Vert_{ \S \cdot  \Y} + \Vert \Proj x- \hat x\Vert_{\S\cdot \Y} =: \Vert \varrho \Vert_{\S\cdot \Y} +\Vert \vartheta \Vert_{\S\cdot \Y},
\end{equation}
where $x\in \mathscr{S}\cdot \Y$ is a solution to the full order problem \eqref{eq:fom} and $\hat x\in \hat{\mathscr{S}} \cdot \hat \Y$ is a solution to the reduced problem \eqref{eq:rom}. The projection $\Proj:\mathscr{S}\cdot\Y \to \hat{\mathscr{S}}\cdot \hat \Y$ denotes either $\Proj_{\hat \Y \to \hat{\mathscr{S}}}$ or $\Proj_{\hat{\mathscr{S}}\to \hat \Y}$ (cf. \eqref{eq:two_projections} or Remark~\ref{rem:simple-notation}). In \eqref{eq:error}, we have split the error using the notions
\begin{equation}\label{eq:theta_rho}
    \varrho = x - \Proj x,\qquad
        \vartheta = \Proj x - \hat x.
\end{equation}
While $\Vert \varrho\Vert_{\S\cdot \Y}$ measures the error between the full order solution \eqref{eq:fom} in $\mathscr{S}\cdot \Y$ and its projection onto the reduced space $\hat{\mathscr{S}}\cdot\hat \Y$, the term $\Vert \vartheta\Vert_{\S\cdot \Y}$ indicates how much the projection onto $\hat{\mathscr{S}}\cdot\hat \Y$ differs from a solution of the reduced order model \eqref{eq:rom}. Such splitting of the error allows to estimate the $\varrho$-error and the $\vartheta$-error separately from each other. The $\varrho$-error is only dependent on the projection onto $\hat{\mathscr{S}}\cdot\hat \Y$ and not on the full order model \eqref{eq:fom} or the reduced order model \eqref{eq:rom}. Its estimation will be carried out in Section~\ref{sec:rho-error}. In comparison, the estimation of the $\vartheta$-error requires knowledge about the bilinear and linear form in \eqref{eq:fom} and \eqref{eq:rom} and is specific for each problem. We will present its estimation for the evolution problem \eqref{eq:evolution_problem} in Section~\ref{sec:theta-error}. In Section~\ref{sec:error-summary}, we combine the results from both sections to estimate the entire error.

\subsection{Error bound of $\varrho$-error}\label{sec:rho-error}
In this section, we derive a bound for the error term $\Vert\varrho\Vert_{\S\cdot\Y}$ from \eqref{eq:error}. This will be achieved in Proposition~\ref{prop:rho_error}. Furthermore, we derive similar bounds for $\Vert \varrho_t\Vert_{\S\cdot\Y}$ and $\Vert \nabla \varrho\Vert_{\S\cdot\Y}$ under appropriate regularity assumptions. Both error bounds are governed by the singular value based notions

\begin{equation}\label{eq:singular_value_bounds}
    \Sigma_{\hat\Y \rightarrow \hat\S} = \left( \sum_{i=\hat q + 1}^q \sigma_i^2\right)^{\frac{1}{2}} + \left(\sum_{j=\hat s}^s \accentset{\circ}{\sigma}_j^2 \right)^{\frac{1}{2}},\quad
    \Sigma_{\hat\S \rightarrow \hat\Y} = \left(\sum_{j=\hat s}^s \underline{\accentset{\circ}{\sigma}}_j^2\right)^{\frac{1}{2}} + \left(\sum_{i=\hat q + 1}^q \smash{\underline \sigma}_i^2\right)^{\frac{1}{2}},
\end{equation}
where $\{\sigma_j\}_{j=1}^q$ denote the singular values of $\mathbf{L}_\Y^T\mathbf{X}\mathbf{L}_\S$, $\{\accentset{\circ}{\sigma}_j\}_{j=1}^s$ the singular values of $\mathbf{L}_{ \Y}^T\accentset{\circ}{\mathbf{X}}_{\mathscr{S}\cdot \hat \Y}\mathbf{L}_\S$, $\{\underline{\accentset{\circ}{\sigma}}_j\}_{j=1}^s$ the singular values of $\mathbf{L}_{ \Y}^T\accentset{\circ}{\mathbf{X}}\mathbf{L}_{\S}$ and $\{\smash{\underline \sigma}_i\}_{i=1}^q$ the singular values of $\mathbf{L}_{ \Y}^T\mathbf{X}_{\hat{\mathscr{S}} \cdot \Y}\mathbf{L}_{\S}$. In this, the matrices $\mathbf{X}$, $\mathbf{X}_{\mathscr{S} \cdot \hat{\Y}}$ and $\mathbf{X}_{\hat{\mathscr{S}} \cdot \Y}$ denote the coefficient matrices of $x$, $\Pi_{\mathscr{S} \cdot \hat{\Y}}x$ and $\Pi_{\hat{\mathscr{S}} \cdot \Y}x$. An overset "$\circ$" indicates the alteration of those matrices by setting the first column to zero. The following lemma shows the connection between the occurring coefficient matrices in the time-projection process.\\

\begin{lemma}\label{lem:non-orthogonal-coeff-mat-connection}
Let $\mathbf{X}_{\mathscr{S}\cdot \Y}$ and $\mathbf{X}_{\hat{\mathscr{S}}\cdot \Y}$ be the coefficient matrix of a function in $\mathscr{S}\cdot \Y$ and its projection onto $\hat{\mathscr{S}}\cdot\Y$. Furthermore, let the matrices $\accentset{\circ}{\mathbf{X}}_{\mathscr{S}\cdot \Y}$ and $\accentset{\circ}{\mathbf{X}}_{\hat{\mathscr{S}}\cdot \Y}$ result from setting the first column of $\mathbf{X}_{\mathscr{S}\cdot \Y}$ and $\mathbf{X}_{\hat{\mathscr{S}}\cdot \Y}$ to zero. Then we have that
$$
\accentset{\circ}{\mathbf{X}}_{\hat{\mathscr{S}}\cdot \Y} 
= \accentset{\circ}{\mathbf{X}}_{\mathscr{S}\cdot \Y} \mathbf{L}_\S \accentset{\circ}{U}_{\hat s - 1} \accentset{\circ}{U}_{\hat s - 1}^T\mathbf{L}_\S^{-1},$$
where the columns of $\accentset{\circ}{U}_{\hat s - 1}$  are the $\hat s - 1$ leading right singular vectors of $\mathbf{L}_\Y\accentset{\circ}{\mathbf{X}}_{\mathscr{S}\cdot \Y}\mathbf{L}_\S$.
\end{lemma}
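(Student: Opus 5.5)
The plan is to pass to the transformed coordinates $\mathbf{Z}=\mathbf{L}_\Y^T\mathbf{X}\mathbf{L}_\S$, in which, by Lemma~\ref{lem:rewrite_x}, the $\S\cdot\Y$-norm is the Frobenius norm. In these coordinates the projection onto $\hat{\mathscr{S}}\cdot\Y$ becomes an orthogonal projection acting on the rows. I expect the main obstacle to be the non-orthogonality of $U_{\hat s}$ from Remark~\ref{rem:not_orthogonal}, and in particular separating the first column from the rest; one compatibility condition needed there is not yet verified.

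\emph{Step 1: coefficient formula for the projection.} A function in $\hat\S\cdot\Y$ has coefficient matrix $\mathbf{C}\,U_{\hat s}^T\mathbf{L}_\S^{-1}$ with respect to $\{\psi_j\nu_i\}$, by \eqref{eq:set_red_basis_different}. Minimizing
$$\Vert \mathbf{L}_\Y^T(\mathbf{X}-\mathbf{C}U_{\hat s}^T\mathbf{L}_\S^{-1})\mathbf{L}_\S\Vert_F=\Vert \mathbf{Z}-\mathbf{L}_\Y^T\mathbf{C}U_{\hat s}^T\Vert_F$$
over $\mathbf{C}$ is a least-squares problem row by row. Because $U_{\hat s}^TU_{\hat s}\neq I_{\hat s}$, I will keep the Gram factor and write the solution with the projector $P=U_{\hat s}(U_{\hat s}^TU_{\hat s})^{-1}U_{\hat s}^T$ onto $\operatorname{range}(U_{\hat s})$. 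This gives
$$\mathbf{X}_{\hat{\mathscr{S}}\cdot\Y}=\mathbf{X}_{\mathscr{S}\cdot\Y}\mathbf{L}_\S P\mathbf{L}_\S^{-1}.$$
The constraint $\hat x(0)=\Pi_\Y x_0$ is automatically satisfied. The reason is that the $\psi_1$-component is reproduced exactly, since $(\mathbf{L}_\S^T)_{:,1}\in\operatorname{range}(U_{\hat s})$.

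\emph{Step 2: split $P$.} Let $l=(\mathbf{L}_\S^T)_{:,1}=\mathbf{L}_\S^Te_1$. I will write $\mathbf{X}=\accentset{\circ}{\mathbf{X}}+\mathbf{X}e_1e_1^T$. Then $\mathbf{X}e_1e_1^T\mathbf{L}_\S=\mathbf{X}e_1 l^T$, so this part lies in the span of $l$ and is fixed by $P$. The contribution of $\mathbf{X}e_1e_1^T$ is therefore $\mathbf{X}e_1e_1^T$ itself, which is killed when the first column is set to zero. It remains to show that
$$\accentset{\circ}{\mathbf{X}}\mathbf{L}_\S P\mathbf{L}_\S^{-1}(I-e_1e_1^T)=\accentset{\circ}{\mathbf{X}}\mathbf{L}_\S\accentset{\circ}{U}_{\hat s-1}\accentset{\circ}{U}_{\hat s-1}^T\mathbf{L}_\S^{-1}.$$

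\emph{Step 3: the remaining identity.} I will decompose $P=P_{\accentset{\circ}{U}}+P_{w}$. Here $P_{\accentset{\circ}{U}}=\accentset{\circ}{U}_{\hat s-1}\accentset{\circ}{U}_{\hat s-1}^T$, and $w$ is the normalized component of $l$ orthogonal to $\operatorname{range}(\accentset{\circ}{U}_{\hat s-1})$.

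\begin{itemize}
\item I expect the $P_{\accentset{\circ}{U}}$ term to give the claimed right-hand side once a column-one relation holds. With $\mathbf{L}_\S$ upper triangular, $\mathbf{L}_\S^{-1}e_1$ is a multiple of $e_1$, and $\hat\psi_j(0)=0$ for $j\ge2$ should translate into $\accentset{\circ}{U}_{\hat s-1}^T\mathbf{L}_\S^{-1}e_1=0$.
\item The term involving $w$ must vanish after multiplication by $\mathbf{L}_\S^{-1}(I-e_1e_1^T)$. This is the delicate point: it amounts to showing that the rows of $\accentset{\circ}{\mathbf{X}}\mathbf{L}_\S$ have no component along $w$.
\end{itemize}

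For the second item, my first attempt is to use that $\accentset{\circ}{U}_{\hat s-1}$ collects the leading right singular vectors of $\mathbf{L}_\Y^T\accentset{\circ}{\mathbf{X}}\mathbf{L}_\S$. I would then try to relate the row space of that matrix to the upper-triangular structure of $\mathbf{L}_\S$ and to $e_1^T\accentset{\circ}{\mathbf{X}}^T=0$, aiming to show the needed orthogonality to $l$. If this fails in general, I would record the compatibility condition $l\perp\operatorname{range}(\accentset{\circ}{U}_{\hat s-1})$, equivalently $\mathbf{M}_{\hat{\mathscr{S}}}$ block diagonal, as the precise hypothesis under which the formula holds.
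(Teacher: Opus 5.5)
There is a genuine gap. It starts in Step~1, not only at the point you flag.

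\textbf{Step~1 uses the wrong projection.} You read $\Pi_{\hat{\mathscr{S}}\cdot\Y}$ as the unconstrained orthogonal projection onto the linear space $\hat{\S}\cdot\Y$, with coefficient matrix $\mathbf{X}\mathbf{L}_{\S}P\mathbf{L}_{\S}^{-1}$. You then assert that the initial condition holds automatically because $\psi_1=\hat\psi_1\in\hat{\S}$. That only shows the part $\mathbf{X}e_1\psi_1$ is reproduced. The projection of the remainder $\accentset{\circ}{x}$ (coefficient matrix $\accentset{\circ}{\mathbf{X}}$, vanishing at $t=0$) in general acquires a nonzero $\hat\psi_1$-component, precisely because $\hat\psi_1$ is not orthogonal to $\hat\psi_2,\dots,\hat\psi_{\hat s}$.

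Concretely, use $P=\accentset{\circ}{U}_{\hat s-1}\accentset{\circ}{U}_{\hat s-1}^T+ww^T$ and $\accentset{\circ}{U}_{\hat s-1}^Te_1=0$. Then the value at $t=0$ of that projection has coefficient vector $(\mathbf{L}_{\S}^{-1})_{11}\,(w^Te_1)\,\accentset{\circ}{\mathbf{X}}\mathbf{L}_{\S}w$. Here $w^Te_1$ is a positive multiple of $(\mathbf{L}_{\S})_{11}\neq 0$. So your Step~1 claim is equivalent to $\accentset{\circ}{\mathbf{X}}\mathbf{L}_{\S}w=0$, which is exactly the orthogonality you could not establish in Step~3.

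\textbf{That condition fails in general.} Let $(\accentset{\circ}{\sigma}_k,\accentset{\circ}{v}_k,\accentset{\circ}{u}_k)$ be the singular triplets of $\mathbf{L}_{\Y}^T\accentset{\circ}{\mathbf{X}}\mathbf{L}_{\S}$. Then $\accentset{\circ}{\mathbf{X}}\mathbf{L}_{\S}w$ is a positive multiple of $\mathbf{L}_{\Y}^{-T}\sum_{k\ge\hat s}\accentset{\circ}{\sigma}_k(\accentset{\circ}{u}_k^Tl)\,\accentset{\circ}{v}_k$. The factor $\accentset{\circ}{u}_k^Tl=e_1^T\mathbf{L}_{\S}\accentset{\circ}{u}_k$ is the $L^2(0,T)$ inner product of $\psi_1$ with a discarded time mode. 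For the nodal hat basis, $\mathbf{L}_{\S}$ is upper bidiagonal and this equals $(\mathbf{L}_{\S})_{12}(\accentset{\circ}{u}_k)_2$, which is generically nonzero.

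The identity you are aiming at reduces to $\accentset{\circ}{\mathbf{X}}\mathbf{L}_{\S}ww^T\mathbf{L}_{\S}^{-1}(I-e_1e_1^T)=0$. This holds only if all discarded modes with $\accentset{\circ}{\sigma}_k>0$, or all retained modes, are $L^2$-orthogonal to $\psi_1$. Generically neither is the case, so under your reading the statement is false. Your fallback hypothesis $l\perp\operatorname{range}(\accentset{\circ}{U}_{\hat s-1})$ is the second alternative. It is exactly the orthogonality that Remark~\ref{rem:not_orthogonal} says is lost, so adopting it would exclude the construction the lemma is about.

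\textbf{The missing idea.} $\Pi_{\hat{\mathscr{S}}\cdot\Y}$ is the best approximation within the affine set $\hat{\mathscr{S}}\cdot\Y$, where the initial value is prescribed, not within $\hat{\S}\cdot\Y$. We have $\hat\psi_1=\psi_1$ and $\hat\psi_j(0)=0$ for $j\ge 2$. Your first bullet is correct here: $\mathbf{L}_{\S}^{-1}e_1$ is a multiple of $e_1$, and right singular vectors with positive singular value lie in the row space of $\mathbf{L}_{\Y}^T\accentset{\circ}{\mathbf{X}}\mathbf{L}_{\S}$, whose elements have zero first entry. Hence the constraint fixes the $\hat\psi_1$-coefficients to $\mathbf{X}e_1$, and only $\accentset{\circ}{x}$ has to be projected, onto $\operatorname{span}\{\hat\psi_2,\dots,\hat\psi_{\hat s}\}\cdot\Y$.

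These functions are orthonormal because $\accentset{\circ}{U}_{\hat s-1}^T\accentset{\circ}{U}_{\hat s-1}=I$. So, row by row, the coefficients are $\accentset{\circ}{\mathbf{X}}\M_{\S}\mathbf{L}_{\S}^{-T}\accentset{\circ}{U}_{\hat s-1}=\accentset{\circ}{\mathbf{X}}\mathbf{L}_{\S}\accentset{\circ}{U}_{\hat s-1}$. This yields the stated formula at once; its first column vanishes by the same fact $\accentset{\circ}{U}_{\hat s-1}^T\mathbf{L}_{\S}^{-1}e_1=0$.

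This is the paper's argument: $\beta_1=0$, then orthonormality of $\hat\psi_2,\dots,\hat\psi_{\hat s}$ and orthogonality of the remainder against them. In your terms, keep the Step~2 splitting, but project the $\accentset{\circ}{\mathbf{X}}$-part with $\accentset{\circ}{U}_{\hat s-1}\accentset{\circ}{U}_{\hat s-1}^T$ instead of $P$. Then neither $w$ nor the non-orthogonality of $\hat\psi_1$ ever enters.
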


\begin{proof}
    Notice, that this proof follows similar ideas as the proofs of Lemma~\ref{lem:space-basis} and Lemma~\ref{lem:time-basis}, with the exception, that orthogonality is only guaranteed between the reduced basis functions $\hat\psi_2,\ldots,\hat\psi_{\hat s}$ (see Remark~\ref{rem:not_orthogonal}). For the space dimension at fixed index $i$, we consider 
$$y := \sum_{j=1}^s \mathbf{x}_{i,j}\psi_j = 
\begin{bmatrix}
\mathbf{x}_{i, 1} & \ldots & \mathbf{x}_{i,s}
\end{bmatrix}
\begin{bmatrix}
    \psi_1\\\vdots\\\psi_s
\end{bmatrix}\in\mathscr{S}$$
and determine the coefficients of the orthogonal projection $\hat y = \sum_{j=1}^{\hat s} \beta_j \hat\psi_j$ of $y$ onto $\hat{\mathscr{S}}$. Therefore, we write $y$ as a function in $\hat{\mathscr{S}}$ and a remainder $\hat R$ in the orthogonal complement:
$$
y = \begin{bmatrix}
\mathbf{x}_{i,1} & \ldots & \mathbf{x}_{i,s}
\end{bmatrix}
\begin{bmatrix}
    \psi_1\\\vdots\\\psi_s
\end{bmatrix}
=
\begin{bmatrix}
\beta_1&\ldots&\beta_{\hat s}
\end{bmatrix}
\begin{bmatrix}
\hat\psi_1\\\vdots\\\hat \psi_{\hat s}
\end{bmatrix}
+ \hat R
$$
Since $\{\psi_j\}_{j=1}^{s}$ is a nodal basis and we have by construction that $\hat \psi_2(0) = \ldots = \hat \psi_{\hat s}(0) = 0$, it follows from the fact that the first column of $\accentset{\circ}{\mathbf{X}}_{\hat{\mathscr{S}}\cdot \Y}$ is zero, that $\hat y (0) = 0$ and therefore $\beta_1 = 0$. In combination with the orthogonality of the basis vectors $\hat\psi_2,\ldots,\hat\psi_{\hat s}$, the orthogonality against $\hat R$ and equations \eqref{eq:different_U} and \eqref{eq:set_red_basis_different}, it follows that
\begin{equation*}
\begin{split}
    \begin{bmatrix}
        \beta_2,\ldots,\beta_{\hat s}
    \end{bmatrix} I_{\hat s - 1} &= \int_0^T
    \underbrace{\beta_1\hat\psi_1(\tau)}_{=0}+
    \begin{bmatrix}
        \beta_2,\ldots,\beta_{\hat s}
    \end{bmatrix}
    \begin{bmatrix}
    \hat \psi_2(\tau)\\\vdots\\\hat\psi_{\hat s}(\tau)
    \end{bmatrix}
    \begin{bmatrix}
    \hat \psi_2(\tau)&\ldots&\hat\psi_{\hat s}(\tau)
    \end{bmatrix}
    \intd \tau\\
    &= \int_0^T(\hat y + \hat R) \begin{bmatrix}
    \hat \psi_2(\tau)&\ldots&\hat\psi_{\hat s}(\tau)
    \end{bmatrix}\intd \tau\\
    &= \int_0^T\begin{bmatrix}
\mathbf{x}_{i,1} & \ldots & \mathbf{x}_{i,s}
\end{bmatrix}
\begin{bmatrix}
    \psi_1\\\vdots\\\psi_s
\end{bmatrix}\begin{bmatrix}
    \hat \psi_2(\tau)&\ldots&\hat\psi_{\hat s}(\tau)
    \end{bmatrix}\intd \tau\\
    &= \begin{bmatrix}
\mathbf{x}_{i,1} & \ldots & \mathbf{x}_{i,s}
\end{bmatrix}\M_{\S}\mathbf{L}_\S^{-T}\accentset{\circ}{U}_{\hat s}\\
&= \begin{bmatrix}
\mathbf{x}_{i,1} & \ldots & \mathbf{x}_{i,s}
\end{bmatrix}\mathbf{L}_\S\accentset{\circ}{U}_{\hat s},
\end{split}
\end{equation*}
where $I_{\hat s - 1}\in \R^{\hat s -1 \times \hat s - 1}$ denotes the $\hat s -1$ dimensional identity. Using this, we can represent $\hat y$ with respect to the basis vectors of $\mathscr{S}$ as
$$\hat y = \begin{bmatrix}
    \beta_1&\ldots&\beta_{\hat s}
\end{bmatrix}\begin{bmatrix}
    \hat\psi_1\\\vdots\\\hat \psi_{\hat s}
\end{bmatrix}
=
\underbrace{\beta_1 \hat\psi_1}_{=0} + \begin{bmatrix}
    \beta_2&\ldots&\beta_{\hat s}
\end{bmatrix}\begin{bmatrix}
    \hat\psi_2\\\vdots\\\hat \psi_{\hat s}
\end{bmatrix}
=
\begin{bmatrix}
\mathbf{x}_{i,1} & \ldots & \mathbf{x}_{i,s}
\end{bmatrix}\mathbf{L}_\S\accentset{\circ}{U}_{\hat s}\accentset{\circ}{U}_{\hat s}^T \mathbf{L}_\S^{-1}\begin{bmatrix}
    \psi_1\\\vdots\\\psi_{s}
\end{bmatrix}
$$
Noting that $\begin{bmatrix}
    \mathbf{x}_{i,1}&\cdots&\mathbf{x}_{i,s}
\end{bmatrix}$ makes up for the $i$th row in the matrix $\accentset{\circ}{\mathbf{X}}_{\mathscr{S}\cdot \Y}$, we conclude that the matrix $\accentset{\circ}{\mathbf{X}}_{\hat{\mathscr{S}}\cdot \Y}$ is given as
\begin{equation*}
    \accentset{\circ}{\mathbf{X}}_{\hat{\mathscr{S}}\cdot \Y} = \accentset{\circ}{\mathbf{X}}_{\mathscr{S}\cdot \Y}\mathbf{L}_\S \accentset{\circ}{U}_{\hat s-1} \accentset{\circ}{U}_{\hat s-1}^T\mathbf{L}_\S^{-1}.
\end{equation*}
\end{proof}

\begin{proposition}\label{prop:rho_error}
        Let $x\in \mathscr{S}\cdot\Y$ and denote $\varrho = x - \Proj x$. Furthermore, let $D$ be the identity or a differential operator, such that $D \in\{\text{id}, \partial_t, \nabla\}$. Then it holds that
        \begin{equation}\label{eq:rho_min_estimation}
        \left\Vert D \varrho\right\Vert_{\S \cdot \Y} \leq C_D 
        \begin{cases}
            \Sigma_{\hat\Y \rightarrow \hat\S} & \text{if }\Proj = \Proj_{\hat \Y \to \hat{\mathscr{S}}},\\
            \Sigma_{\hat\S \rightarrow \hat\Y} & \text{if }\Proj = \Proj_{\hat{\mathscr{S}} \to \hat \Y},
        \end{cases}
    \end{equation}
    where $\Sigma_{\hat\Y \rightarrow \hat\S}$ and $\Sigma_{\hat\S \rightarrow \hat\Y}$ are defined as in \eqref{eq:singular_value_bounds}.
\end{proposition}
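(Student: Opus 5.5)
We treat the case $\Proj = \Proj_{\hat \Y \to \hat{\mathscr{S}}}$ in detail; the case $\Proj_{\hat{\mathscr{S}}\to\hat\Y}$ is symmetric, with the roles of the two projection steps interchanged. The plan is to split $\varrho$ by inserting the intermediate projection. With $x_{\hat\Y} := \Pi_{\mathscr{S}\cdot\hat\Y}x$ we write
\begin{equation*}
D\varrho = D\bigl(x - x_{\hat\Y}\bigr) + D\bigl(x_{\hat\Y} - \Pi_{\hat{\mathscr{S}}\cdot\hat\Y}x_{\hat\Y}\bigr)
\end{equation*}
and apply the triangle inequality. Each term is then bounded separately: the first by the trailing singular values $\sigma_{\hat q+1},\dots,\sigma_q$, the second by the trailing values $\accentset{\circ}{\sigma}_{\hat s},\dots,\accentset{\circ}{\sigma}_s$. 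This matches the structure of $\Sigma_{\hat\Y \rightarrow \hat\S}$.

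\textbf{First term.} We use Lemma~\ref{lem:rewrite_x}. For $D=\mathrm{id}$ we have $\Vert x - x_{\hat\Y}\Vert_{\S\cdot\Y} = \Vert \mathbf{L}_\Y^T(\mathbf{X}-\mathbf{X}_{\mathscr{S}\cdot\hat\Y})\mathbf{L}_\S\Vert_F$. By the construction in Lemma~\ref{lem:space-basis}, $\mathbf{L}_\Y^T\mathbf{X}_{\mathscr{S}\cdot\hat\Y}\mathbf{L}_\S = V_{\hat q}V_{\hat q}^T\mathbf{L}_\Y^T\mathbf{X}\mathbf{L}_\S$, which is the truncated SVD. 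Hence this term equals $(\sum_{i>\hat q}\sigma_i^2)^{1/2}$ exactly. The spatial projection preserves the initial condition only up to projection, which is harmless here: the space reduction does not touch the time basis.

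For $D=\partial_t$ we write $\mathbf{L}_\Y^T(\cdot)\mathbf{J}_\S = \mathbf{L}_\Y^T(\cdot)\mathbf{L}_\S\,\mathbf{L}_\S^{-1}\mathbf{J}_\S$ and bound the result by $\Vert\mathbf{L}_\S^{-1}\mathbf{J}_\S\Vert_2$ times the Frobenius norm above. For $D=\nabla$ we analogously factor out $\Vert\mathbf{J}_\Y^T\mathbf{L}_\Y^{-T}\Vert_2$. These matrix norms are inverse-inequality constants of the discrete spaces, and we collect them into $C_D$, with $C_{\mathrm{id}}=1$.

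\textbf{Second term.} The difference $x_{\hat\Y}-\Pi_{\hat{\mathscr{S}}\cdot\hat\Y}x_{\hat\Y}$ vanishes at $t=0$: by construction $\hat\psi_1=\psi_1$, and the first column is reproduced exactly. Its coefficient matrix is therefore $\accentset{\circ}{\mathbf{X}}_{\mathscr{S}\cdot\hat\Y}-\accentset{\circ}{\mathbf{X}}_{\hat{\mathscr{S}}\cdot\hat\Y}$. By Lemma~\ref{lem:non-orthogonal-coeff-mat-connection}, applied to $\mathscr{S}\cdot\hat\Y$ as permitted by Remark~\ref{rem:simple-notation}, this equals $\accentset{\circ}{\mathbf{X}}_{\mathscr{S}\cdot\hat\Y}\mathbf{L}_\S(I-\accentset{\circ}{U}_{\hat s-1}\accentset{\circ}{U}_{\hat s-1}^T)\mathbf{L}_\S^{-1}$. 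Multiplying by $\mathbf{L}_\Y^T$ from the left and by $\mathbf{L}_\S$ from the right gives $\mathbf{L}_\Y^T\accentset{\circ}{\mathbf{X}}_{\mathscr{S}\cdot\hat\Y}\mathbf{L}_\S(I-\accentset{\circ}{U}\accentset{\circ}{U}^T)$. This is the residual of the best rank-$(\hat s-1)$ approximation, so its Frobenius norm is $(\sum_{j\ge\hat s}\accentset{\circ}{\sigma}_j^2)^{1/2}$. The index shift, starting at $\hat s$ instead of $\hat s+1$, accounts for the slot used up by $\hat\psi_1$. The derivative cases are handled with the same constants as before.

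\textbf{Reverse order and main obstacle.} For $\Proj_{\hat{\mathscr{S}}\to\hat\Y}$ the same argument applies in reverse order. The time step comes first, with $\underline{\accentset{\circ}{\sigma}}_j$ from $\accentset{\circ}{\mathbf{X}}$. The space step then follows on $\mathbf{X}_{\hat{\mathscr{S}}\cdot\Y}$, with $\underline\sigma_i$ via Lemma~\ref{lem:space-basis}. I expect the main obstacle to be the non-orthogonality of $\hat\psi_1$ against the other basis functions (Remark~\ref{rem:not_orthogonal}). One must check that $\Pi_{\hat{\mathscr{S}}\cdot\hat\Y}$ really acts on the zero-initial part as the orthogonal projection onto $\mathrm{span}\{\hat\psi_2,\dots\}$, so that Lemma~\ref{lem:non-orthogonal-coeff-mat-connection} applies. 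Equally, one must check that the first column is exactly preserved, so that the difference genuinely lies in the $\circ$-space. If this fails, I would instead bound by the distance to the specific element whose coefficients keep column one and project the rest, which is still admissible and yields the same bound.
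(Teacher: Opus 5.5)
Your proposal is correct and follows the paper's proof essentially step by step: the same triangle-inequality split through $\Pi_{\mathscr{S}\cdot\hat\Y}x$, the truncated-SVD identity for the space step, and, for the time step, the preserved first column together with Lemma~\ref{lem:non-orthogonal-coeff-mat-connection}, which yields the best rank-$(\hat s-1)$ residual and hence the sum starting at $j=\hat s$. The only deviation is that you factor out $\mathbf{L}_\S^{-1}\mathbf{J}_\S$ and $\mathbf{J}_\Y^T\mathbf{L}_\Y^{-T}$ in the spectral norm rather than the Frobenius norm; this is equally valid and gives sharper constants (for instance $C_{\mathrm{id}}=1$ instead of the paper's $\sqrt{qs}$).
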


\begin{proof}
    We show the statement for $\Proj = \Proj_{\hat \Y \to \hat{\mathscr{S}}}$. The proof for $\Proj = \Proj_{\hat{\mathscr{S}} \to \hat \Y}$ follows with the same arguments. First of all, it holds that
    \begin{equation}\label{eq:estimate_rho}
            \Vert D \varrho \Vert_{ \S \cdot  \Y} \leq \Vert D(x - \Pi_{\mathscr{S} \cdot \hat \Y}x )\Vert_{\S\cdot\Y} + \Vert D(\Pi_{\mathscr{S} \cdot \hat \Y}x - \Proj_{\hat \Y \to \hat{\mathscr{S}}}x)\Vert_{\S\cdot \Y}.
    \end{equation}

    Depending on which (differential) operator we choose for $D$, we define the matrices
    \begin{equation}\label{eq:space_time_evolution_estimation_matrices}
	\mathbf \Lambda_{\Y} = 
		\begin{cases}
			\mathbf{L}_{\Y} & \text{if } D \in \{\text{id}, \partial_t\},\\ 
			\mathbf{J}_{\Y} & \text{if } D = \nabla,
		\end{cases}\qquad
		\mathbf \Lambda_{\S} = 
		\begin{cases}
			\mathbf{L}_{\S} & \text{if } D \in \{\text{id}, \nabla\},\\ 
			\mathbf{J}_{\S} & \text{if } D =\partial_t.
		\end{cases}
	\end{equation}

    \noindent\textbf{Estimation of space-projection error}\\
    Let $\mathbf{X}$ denote the coefficient matrix of $x\in \mathscr{S}\cdot \Y$. It is known from the proof of \cite[Lemma~2.5]{baumann18} that the coefficient matrix $\mathbf{X}_{\mathscr{S}\cdot \hat\Y}$ of the projection $\Pi_{\mathscr{S}\cdot \hat{\Y}}x$ is given as 
    \begin{equation}\label{eq:similar_representation}
    \mathbf{X}_{\mathscr{S}\cdot \hat\Y} = \mathbf{L}_\Y^{-T} V_{\hat q}V_{\hat q}^T\mathbf{L}_\Y^T \mathbf{X},
    \end{equation}
    where $V_{\hat q}\in\R^{q\times \hat q}$ is the matrix of the $\hat q$ leading left singular vectors of $\mathbf{L}_\Y^T\mathbf{X}\mathbf{L}_\S$. By means of Lemma~\ref{lem:rewrite_x}, we can rewrite $\Vert\cdot\Vert_{\S\cdot\Y}$ with respect to the Frobenius norm and the matrices \eqref{eq:space_time_evolution_estimation_matrices}. It holds that
    \begin{equation*}
    		\begin{split}
			\Vert D (x - \Pi_{\mathscr S \cdot \hat \Y} x) \Vert_{\S\cdot \Y}^2
			&= \Vert \mathbf \Lambda_{\Y}^T (\mathbf X - \mathbf X_{\mathscr S \cdot \hat \Y} ) \mathbf \Lambda_{\S}\Vert_F^2\\
			&=  \Vert \mathbf \Lambda_{\Y}^T \mathbf L_{\Y}^{-T} \mathbf L_{\Y}^{T} (\mathbf X - \mathbf L^{-T}_{\Y} V_{\hat q}V_{\hat q}^T \mathbf L^{T}_{\Y} \mathbf X)\mathbf L_{\S} \mathbf L_{\S}^{-1}\mathbf \Lambda_{\S}\Vert_F^2\\
			&\leq \Vert \mathbf \Lambda_{\Y}^T \mathbf L_{\Y}^{-T}\Vert_F^2 \cdot \Vert \mathbf L_{\Y}^{T} \mathbf X \mathbf L_{\S} - V_{\hat q}V_{\hat q}^T \mathbf L^{T}_{\Y} \mathbf X\mathbf L_{\S} \Vert_F^2 \cdot \Vert \mathbf L_{\S}^{-1}\mathbf \Lambda_{\S}\Vert_F^2\\
		\end{split}
	\end{equation*}
	We denote the singular value decomposition of $\mathbf L_{\Y}^{T} \mathbf X \mathbf L_{\S}$ as $\mathbf L_{\Y}^{T} \mathbf X \mathbf L_{\S} = V \Sigma U^T$ as well as $C_{D} := \Vert \mathbf \Lambda_{\Y}^T \mathbf L_{\Y}^{-T}\Vert_F \cdot \Vert \mathbf L_{\S}^{-1}\mathbf \Lambda_{\S}\Vert_F < \infty$ . Then we have, that
	\begin{equation}\label{eq:estimate_rho_first}
		\begin{split}
			\Vert D (x - \Pi_{\mathscr S \cdot \hat \Y} x) \Vert_{\S\cdot \Y}^2 &\leq C_D^2 \cdot \Vert V \Sigma U^T - V_{\hat q}V_{\hat q}^T V \Sigma U^T\Vert_F^2\\
			&= C_D^2 \cdot \Vert \Sigma -  V^T V_{\hat q}V_{\hat q}^T V \Sigma \Vert_F^2\\
			&= C_D^2 \cdot \Vert \Sigma -  \tilde I_{\hat r} \Sigma \Vert_F^2\\
			&= C_D^2 \cdot \sum_{i = \hat q + 1}^{q} \sigma_i^2,
		\end{split}
	\end{equation}
    where $\{\sigma_i\}_{i=1}^q$ denote the singular values of $\mathbf{L}_\Y^T\mathbf{X}\mathbf{L}_\S$ and we have written
    \begin{equation}\label{eq:strange_identity}
        \tilde I_{\hat q} = \begin{pmatrix}
        I_{\hat q} &\mathbf{0}\\
         \mathbf{0}   & \mathbf{0}
    \end{pmatrix}\in \R^{q\times q}
    \end{equation}
    for $I_{\hat q}\in \R^{\hat q \times \hat q}$ being the $\hat q$-dimensional identity.\\

    \noindent\textbf{Estimation of time-projection error}\\
    The coefficient matrices of the projections in $\Vert D( \Pi_{\mathscr{S}\cdot \hat\Y} x - \Proj_{\hat\Y\to\hat{\mathscr{S}}}x)\Vert_{\S\cdot\Y}$ are denoted as $\mathbf{X}_{\mathscr{S}\cdot \hat\Y}$ and $\mathbf{X}_{\hat{\mathscr{S}}\cdot \hat\Y}$. Due to the treatment of the initial condition as described in Section~\ref{sec:space_time_pod_initial} and the required alteration of the time basis, we have to proceed differently with the estimation of the time-projection error than before for the space-projection. As argued in Remark~\ref{rem:not_orthogonal}, the orthogonality of the reduced basis functions $\{\hat \psi_j\}_{j=1}^{\hat s}$ is not guaranteed and we cannot use a formula similar to \eqref{eq:similar_representation} from the proof of Lemma~\ref{lem:time-basis}.
    
    Since both $\Pi_{\mathscr{S}\cdot \hat\Y} x$ and $\Proj_{\hat\Y\to\hat{\mathscr{S}}} x$ are governed by the same initial condition, the first column of $\mathbf{X}_{\mathscr{S}\cdot \hat\Y}$ is equal to the first column of $\mathbf{X}_{\hat{\mathscr{S}}\cdot \hat\Y}$. Hence, it holds that $\mathbf{X}_{\mathscr{S}\cdot \hat\Y} - \mathbf{X}_{\hat{\mathscr{S}}\cdot \hat\Y} = \accentset{\circ}{\mathbf{X}}_{\mathscr{S}\cdot \hat\Y} - \accentset{\circ}{\mathbf{X}}_{\hat{\mathscr{S}}\cdot \hat\Y},$ where $\accentset{\circ}{\mathbf{X}}_{\mathscr{S}\cdot \hat\Y}$ and $\accentset{\circ}{\mathbf{X}}_{\hat{\mathscr{S}}\cdot \hat\Y}$ result from setting the first column of $\mathbf{X}_{\mathscr{S}\cdot \hat\Y}$ and $\mathbf{X}_{\hat{\mathscr{S}}\cdot \hat\Y}$ to zero. This implies that, after writing $\Vert D(\Pi_{\mathscr{S}\cdot \hat\Y} x - \Proj_{\hat\Y\to\hat{\mathscr{S}}}x)\Vert_{\S\cdot\Y}$ with respect to the Frobenius norm, we can replace $\mathbf{X}_{\mathscr{S}\cdot \hat\Y} - \mathbf{X}_{\hat{\mathscr{S}}\cdot \hat\Y}$ with $\accentset{\circ}{\mathbf{X}}_{\mathscr{S}\cdot \hat\Y} - \accentset{\circ}{\mathbf{X}}_{\hat{\mathscr{S}}\cdot \hat\Y}$. Lemma~\ref{lem:non-orthogonal-coeff-mat-connection} provides an explicit expression for the time-reduced measurement matrix, i.e.\ $\accentset{\circ}{\mathbf{X}}_{\hat{\mathscr{S}}\cdot \hat\Y} = \accentset{\circ}{\mathbf{X}}_{\mathscr{S}\cdot \hat\Y}\mathbf{L}_\S \accentset{\circ}{U}_{\hat s-1} \accentset{\circ}{U}_{\hat s-1}^T\mathbf{L}_\S^{-1}$, where $\accentset{\circ}{U}_{\hat s-1} $ is the matrix of the $\hat s$ leading right singular vectors of $\mathbf{L}_\Y^T \accentset{\circ}{\mathbf{X}}_{\mathscr{S}\cdot \hat \Y}\mathbf{L}_\S$. It follows with Lemma~\ref{lem:rewrite_x} that
    
    \begin{equation*}
        \begin{split}
            \Vert D( \Pi_{\mathscr{S}\cdot \hat\Y} x &- \Proj_{\hat \Y \to \hat{\mathscr{S}}}x)\Vert_{\S\cdot\Y}^2 \\
            &= \Vert \mathbf{\Lambda}_\Y^T [\accentset{\circ}{\mathbf{X}}_{\mathscr{S}\cdot \hat \Y} - \accentset{\circ}{\mathbf{X}}_{\hat{\mathscr{S}}\cdot \hat \Y}]\mathbf{\Lambda}_\S\Vert_F^2\\
            &=  \Vert \mathbf \Lambda_{\Y}^T \mathbf L_{\Y}^{-T} \mathbf L_{\Y}^{T} (\mathbf X_{\mathscr S \cdot \hat \Y} - \mathbf X_{\mathscr S \cdot \hat \Y}\mathbf{L}_{\S}  \accentset{\circ}{U}_{\hat s-1} \accentset{\circ}{U}_{\hat s-1}^T\mathbf{L}_{\S}^{-1})\mathbf \Lambda_{\S}\Vert_F^2\\
	&\leq \Vert \mathbf \Lambda_{\Y}^T \mathbf L_{\Y}^{-T}\Vert_F^2 \cdot \Vert \mathbf L_{\Y}^{T} \mathbf X_{\mathscr S \cdot \hat \Y}  \mathbf L_{\S} -  \mathbf L^{T}_{\Y} \mathbf X_{\mathscr S \cdot \hat \Y}\mathbf{L}_{\S}  \accentset{\circ}{U}_{\hat s-1} \accentset{\circ}{U}_{\hat s-1}^T\Vert_F^2 \cdot \Vert \mathbf L_{\S}^{-1}\mathbf \Lambda_{\S}\Vert_F^2.
        \end{split}
    \end{equation*}
    We insert the singular value decomposition of $\mathbf L_{\Y}^{T} \mathbf X_{\mathscr S \cdot \hat \Y} \mathbf L_{\S}$ as $\mathbf L_{\Y}^{T} \mathbf X_{\mathscr S \cdot \hat \Y} \mathbf L_{\S} = \hat V \hat \Sigma \hat U^T$ and the constant $C_{D}$ from above in this inequality in order to get
	\begin{equation}\label{eq:estimate_rho_second}
		\begin{split}
			\Vert D (\Pi_{\mathscr S \cdot \hat \Y} x - \Pi_{\hat{\mathscr S} \cdot \hat \Y} x) \Vert_{\S\cdot\Y}^2 &\leq C_D^2 \cdot \Vert \hat V \hat \Sigma \hat U^T  -  \hat V \hat \Sigma \hat U^T \accentset{\circ}{U}_{\hat s-1} \accentset{\circ}{U}_{\hat s-1}^T\Vert_F^2\\
			&= C_D^2 \cdot \Vert \hat \Sigma  - \hat \Sigma \hat U^T \accentset{\circ}{U}_{\hat s-1} \accentset{\circ}{U}_{\hat s-1}^T \hat U\Vert_F^2\\
			&= C_D^2 \cdot \Vert \hat \Sigma  - \hat \Sigma \tilde I_{\hat s - 1}\Vert_F^2\\
			&= C_D^2 \cdot \sum_{j = \hat s }^s \accentset{\circ}{\sigma}_j^2,
		\end{split}
	\end{equation}
    where $\tilde I_{\hat s - 1}$ is defined as in \eqref{eq:strange_identity} and $\{\accentset{\circ}{\sigma}_j\}_{i=1}^n$ denote the singular values of $\mathbf{L}_{ \Y}^T\accentset{\circ}{\mathbf{X}}_{\mathscr{S}\cdot \hat \Y}\mathbf{L}_\S$. Inserting \eqref{eq:estimate_rho_first} and \eqref{eq:estimate_rho_second} in \eqref{eq:estimate_rho}, we arrive at
    \begin{equation}\label{eq:estimate_rho_standard}
        \Vert D \varrho\Vert_{\S \cdot \Y} \leq C_D\left( \left( \sum_{i=\hat q + 1}^q \sigma_i^2 \right)^{\frac{1}{2}}+\left( \sum_{j=\hat s}^s \accentset{\circ}{\sigma}_j^2 \right)^{\frac{1}{2}}\right) = C_D\Sigma_{\hat\Y \rightarrow \hat\S}
    \end{equation}
    with $\{\sigma_i\}_{i=1}^s$ being the singular values of $\mathbf{L}_\Y^T\mathbf{X}\mathbf{L}_\S$ and $\{\accentset{\circ}{\sigma}_j\}_{j=1}^s$ the singular values of $\mathbf{L}_{ \Y}^T\accentset{\circ}{\mathbf{X}}_{\mathscr{S}\cdot \hat \Y}\mathbf{L}_\S$.\\
\end{proof}

\subsection{Error bound of $\vartheta$-error}\label{sec:theta-error}
In this section, we derive a bound for the term $\Vert \vartheta\Vert_{\S\cdot \Y}$ in Proposition~\ref{prop:bound_evolution_problem}. We remark, that parts of the proof are motivated by the computations in the proof of \cite[Proposition~2.36]{steih14}.\\

\begin{proposition}\label{prop:bound_evolution_problem}
    Let $\varrho$ and $\vartheta$ be defined as in \eqref{eq:theta_rho}. Then it holds that
    \begin{equation}
        \Vert \vartheta\Vert_{\S\cdot\Y} \leq C_\vartheta \cdot \left(\Vert \varrho\Vert_{\S \cdot \Y} + \Vert \varrho_t\Vert_{\S \cdot \Y} + \Vert \nabla \varrho\Vert_{\S \cdot \Y}\right)
        \end{equation}
    for a constant $C_\vartheta>0$.
\end{proposition}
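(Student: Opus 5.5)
We plan to use the Galerkin orthogonality between the FOM \eqref{eq:fom} and the ROM \eqref{eq:rom}, together with a stability estimate for the reduced space-time problem. Since $\hat{\mathscr{S}}\cdot\hat\Y\subseteq\mathscr{S}\cdot\Y$, every test function $\hat\nu_i\hat\psi_j$ of the ROM is a linear combination of the FOM test functions $\nu_k\psi_l$. Subtracting the two systems therefore gives
$$B(x-\hat x,\hat\varphi)=0\qquad\text{for all }\hat\varphi\in\hat\S\cdot\hat\Y .$$
Writing $x-\hat x=\varrho+\vartheta$ yields the error equation
$$B(\vartheta,\hat\varphi)=-B(\varrho,\hat\varphi)=-\int_0^T(\varrho_t,\hat\varphi)_{L^2(\Omega)}+a(t;\varrho,\hat\varphi)\intd t .$$
Moreover $\vartheta\in\hat\S\cdot\hat\Y$ satisfies $\vartheta(0)=0$ by \eqref{eq:teta_zero_zero}, because both $\Proj x$ and $\hat x$ carry the same initial value $\Pi_\Y x_0$ (we use the convention of Section~\ref{sec:space_time_pod_initial} that the initial condition is not reduced).

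The natural next step is to test with $\hat\varphi=\vartheta$, which is admissible since $\vartheta$ lies in the reduced space. Using $\int_0^T(\vartheta_t,\vartheta)\intd t=\tfrac12\Vert\vartheta(T)\Vert^2-\tfrac12\Vert\vartheta(0)\Vert^2=\tfrac12\Vert\vartheta(T)\Vert^2\ge0$ and the coercivity \eqref{eq:a_coercive_continuous:b}, we get
$$\alpha\int_0^T\Vert\vartheta\Vert_V^2\intd t\le \Vert\varrho_t\Vert_{\S\cdot\Y}\Vert\vartheta\Vert_{\S\cdot\Y}+\gamma\int_0^T\Vert\varrho\Vert_V\Vert\vartheta\Vert_V\intd t .$$
With $V=H^1_0(\Omega)$ we take $\Vert\cdot\Vert_V^2=\Vert\cdot\Vert_{L^2}^2+\Vert\nabla\cdot\Vert_{L^2}^2$, or use the Poincar\'e inequality, which gives $\int_0^T\Vert\varrho\Vert_V^2\le\Vert\varrho\Vert_{\S\cdot\Y}^2+\Vert\nabla\varrho\Vert_{\S\cdot\Y}^2$. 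The left-hand side dominates $\alpha\Vert\vartheta\Vert_{\S\cdot\Y}^2$ (with a Poincar\'e constant if needed), and the right-hand side is bounded by
$$\Bigl(\Vert\varrho_t\Vert_{\S\cdot\Y}+\gamma(\Vert\varrho\Vert_{\S\cdot\Y}+\Vert\nabla\varrho\Vert_{\S\cdot\Y})\Bigr)\,\Vert\vartheta\Vert_{L^2(0,T;V)} .$$
Dividing by $\Vert\vartheta\Vert_{L^2(0,T;V)}$ and then bounding $\Vert\vartheta\Vert_{\S\cdot\Y}\le c_P\Vert\vartheta\Vert_{L^2(0,T;V)}$ gives the claim with $C_\vartheta=c_P\max\{1,\gamma\}/\alpha$.

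The main obstacle is the admissibility of $\vartheta$ as a test function together with the meaning of $B(\varrho,\cdot)$. The ROM tests only with the span of $\hat\nu_i\hat\psi_j$, and $\vartheta$ lies exactly in this span, so the argument goes through for $\hat{\mathscr{S}}\cdot\hat\Y$ even though the basis is not orthogonal (Remark~\ref{rem:not_orthogonal}). The subtle point is that $\Proj x$ must belong to the affine set $\hat{\mathscr{S}}\cdot\hat\Y$, so that $\vartheta(0)=0$. This requires that the initial datum lies in $\hat\Y$ after projection, which holds by the construction in Section~\ref{sec:space_time_pod_initial}; I would state this explicitly as part of the standing assumptions.

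Finally, the term $\Vert\varrho_t\Vert$ needs $\S\subseteq H^1(0,T)$, and the gradient term needs $\Y\subseteq H^1_0(\Omega)$. Both are assumed in the FOM setting, so the right-hand side is well defined and finite.
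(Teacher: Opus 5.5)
Your proposal is correct and follows essentially the same route as the paper. Both arguments use coercivity together with $\vartheta(0)=0$ to get $B(\vartheta,\vartheta)\ge\alpha\Vert\vartheta\Vert_{L^2(0,T;V)}^2$, then Galerkin orthogonality $B(\vartheta,\vartheta)=-B(\varrho,\vartheta)$, continuity and Cauchy--Schwarz, division by $\Vert\vartheta\Vert_{L^2(0,T;V)}$, and finally Poincar\'e. The one cosmetic difference is in the time-derivative term: you bound it directly with the $L^2(0,T;L^2(\Omega))$ Cauchy--Schwarz inequality, whereas the paper first passes through $\Vert\varrho_t\Vert_{L^2(0,T;V^*)}$ (picking up a factor $\sqrt{2}$) and then uses $\varrho_t\in L^2(0,T;L^2(\Omega))$ to control the dual norm.
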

\begin{proof}
    Consider the bilinear form $B(\cdot,\cdot)$ and linear form $L(\cdot)$ as introduced in \eqref{eq:bilinear_linear_form}. We use the coercivity of $a(t;\cdot, \cdot)$ as stated in \eqref{eq:a_coercive_continuous:b}, as well as the fact that $\vartheta(0) = 0$ from \eqref{eq:teta_zero_zero} in order to show that
    \begin{equation}\label{eq:proof_theta_1}
        \begin{split}
            B(\vartheta, \vartheta) &= \int_0^T \langle(\vartheta(t))_t, \vartheta(t)\rangle_{V^*,V} \intd t + \int_0^T a(t; \vartheta, \vartheta)\intd t\\
            &= \frac{1}{2} \left[\Vert \vartheta(T)\Vert_H^2 - \Vert \vartheta(0)\Vert^2_H\right] + \int_0^T a(t;\vartheta,\vartheta)\intd t\\
            &\geq \int_0^T a(t; \vartheta, \vartheta)\intd t\\
            &\geq \alpha \int_0^T\Vert \vartheta(t) \Vert_V^2 \intd t\\
            &= \alpha \Vert \vartheta \Vert_{L^2(0,T;V)}^2
        \end{split}
    \end{equation}
    for $\vartheta \in \hat{\mathscr{S}} \cdot \hat \Y$. Next, the Galerkin-orthogonality with respect to $B(\cdot, \cdot)$ holds as
    \begin{equation}\label{eq:proof_theta_2}
        \begin{split}
            B(\vartheta, \vartheta) &= B(\Proj x, \vartheta) - B(\hat x, \vartheta)\\
            &= B(\Proj x, \vartheta) - B(x, \vartheta) + B(x, \vartheta) - B(\hat x, \vartheta)\\
            &= B(\Proj x - x, \vartheta) + L(\vartheta) - L(\vartheta)\\
            &= - B(\varrho, \vartheta).
        \end{split}
    \end{equation}
    Taking advantage of the Cauchy-Schwarz inequality as well as the uniform continuity of $a(t;\cdot,\cdot)$ as stated in \eqref{eq:a_coercive_continuous:a}, we estimate
    \begin{equation}\label{eq:proof_theta_3}
        \begin{split}
            |B(\varrho, \vartheta)| &\leq \int_0^T \Vert\varrho_t(t)\Vert_{V^*} \Vert \vartheta (t)\Vert_V \intd t + \gamma \int_0^T\Vert \varrho(t) \Vert_V \Vert\vartheta(t)\Vert_V\intd t\\
            &\leq \max\{1,\gamma\} \int_0^T \left(\Vert\varrho_t(t)\Vert_{V^*} + \Vert \varrho(t) \Vert_V\right) \cdot \Vert\vartheta(t)\Vert_V\intd t\\
            &\overset{\text{Hölder}}{\leq} \max\{1,\gamma\} \left(\int_0^T\left(\Vert\varrho_t(t)\Vert_{V^*} + \Vert \varrho(t) \Vert_V\right)^2\intd t\right)^{\frac{1}{2}} \left(\int_0^T \Vert \vartheta(t)\Vert_V^2\intd t\right)^{\frac{1}{2}}\\
            &\leq \max\{1,\gamma\} \left(\int_0^T 2 \left(\Vert\varrho_t(t)\Vert_{V^*}^2 + \Vert \varrho(t) \Vert_V^2\right)\intd t\right)^{\frac{1}{2}} \Vert \vartheta \Vert_{L^2(0,T;V)}\\
            &\leq \sqrt{2} \max \{1,\gamma\} \left(\Vert \varrho_t\Vert_{L^2(0,T;V^*)} + \Vert \varrho\Vert_{L^2(0,T;V)}\right)\Vert \vartheta \Vert_{L^2(0,T;V)}
        \end{split}
    \end{equation}
    Combining \eqref{eq:proof_theta_1}, \eqref{eq:proof_theta_2} and \eqref{eq:proof_theta_3}, we find that
    \begin{equation}
        \begin{split}
        \alpha \Vert \vartheta \Vert^2_{L^2(0,T;V)}&\overset{\eqref{eq:proof_theta_1}}{\leq} B(\vartheta, \vartheta)\overset{\eqref{eq:proof_theta_2}}{=} - B(\varrho, \vartheta)\\
        &\overset{\eqref{eq:proof_theta_3}}{\leq} \sqrt 2 \max\{1,\gamma\}\left(\Vert \varrho_t\Vert_{L^2(0,T;V^*)} + \Vert \varrho\Vert_{L^2(0,T;V)}\right)\Vert \vartheta \Vert_{L^2(0,T;V)}.
        \end{split}
    \end{equation}
    We divide both hand sides by $\alpha \Vert \vartheta \Vert_{L^2(0,T;V)}$ and insert $V=H^1_0(\Omega)$. Furthermore, the fact that $\varrho_t \in L^2(0,T;L^2(\Omega))$ (instead of just $\varrho_t \in L^2(0,T;H^{-1}(\Omega))$) allows to estimate the dual norm, such that
    \begin{equation*}
    	\Vert \vartheta \Vert_{L^2(0,T;H^1_0(\Omega))} \leq \tilde C_{\vartheta}\left(\Vert \varrho_t\Vert_{\S \cdot \Y} + \Vert \varrho\Vert_{\S \cdot \Y} + \Vert \nabla \varrho\Vert_{\S \cdot \Y}\right)
    \end{equation*}
    for $\tilde C_\vartheta > 0$. An application of Poincaré's inequality yields the claim.\\
\end{proof}

\subsection{Combination of error approximation results}\label{sec:error-summary}

We combine the results from the previous subsections in order to prove the following error estimate.\\

\begin{theorem}[Error estimation]\label{thm:final_estimation}
    Let $x\in \mathscr{S}\cdot \Y$ be a solution to \eqref{eq:fom} and $\hat x \in \hat{\mathscr{S}}\cdot \hat\Y$ a solution to \eqref{eq:rom}. Then it holds that
    \begin{equation}
        \Vert x - \hat x\Vert_{\S\cdot \Y} \leq C \cdot \begin{cases}
            \Sigma_{\hat\Y \rightarrow \hat{\S}} & \text{if }\Proj = \Proj_{\hat \Y \to \hat{\mathscr{S}}},\\
             \Sigma_{\hat\S \rightarrow \hat\Y} & \text{if }\Proj = \Proj_{\hat{\mathscr{S}} \to \hat \Y},
        \end{cases}
    \end{equation}
    where 
    \begin{equation*}
     \Sigma_{\hat\Y \rightarrow \hat\S} = \left( \sum_{i=\hat q + 1}^q \sigma_i^2\right)^{\frac{1}{2}} + \left(\sum_{j=\hat s}^s \accentset{\circ}{\sigma}_j^2 \right)^{\frac{1}{2}},\quad
    \Sigma_{\hat\S \rightarrow \hat\Y} = \left(\sum_{j=\hat s}^s \underline{\accentset{\circ}{\sigma}}_j^2\right)^{\frac{1}{2}} + \left(\sum_{i=\hat q + 1}^q \smash{\underline \sigma}_i^2\right)^{\frac{1}{2}}.
    \end{equation*}
    In this, $\{\sigma_j\}_{j=1}^s$ denote the singular values of $\mathbf{L}_\Y^T\mathbf{X}\mathbf{L}_\S$, $\{\accentset{\circ}{\sigma}_j\}_{j=1}^s$ the singular values of $\mathbf{L}_{ \Y}^T\accentset{\circ}{\mathbf{X}}_{\mathscr{S}\cdot \hat \Y}\mathbf{L}_\S$, $\{\underline{\accentset{\circ}{\sigma}}_j\}_{j=1}^s$ the singular values of $\mathbf{L}_{ \Y}^T\accentset{\circ}{\mathbf{X}}\mathbf{L}_{\S}$ and $\{\smash{\underline \sigma}_i\}_{i=1}^q$ the singular values of $\mathbf{L}_{ \Y}^T\mathbf{X}_{\hat{\mathscr{S}} \cdot \Y}\mathbf{L}_{\S}$. The matrices $\mathbf{X}$, $\mathbf{X}_{\mathscr{S} \cdot \hat{\Y}}$ and $\mathbf{X}_{\hat{\mathscr{S}} \cdot \Y}$ denote the coefficient matrices of $x$, $\Pi_{\mathscr{S} \cdot \hat{\Y}}x$ and $\Pi_{\hat{\mathscr{S}} \cdot \Y}x$. An overset "$\circ$" indicates the alteration of those matrices by setting the first column to zero.
\end{theorem}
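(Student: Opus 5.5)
The proof will combine the error splitting \eqref{eq:error} with Proposition~\ref{prop:rho_error} and Proposition~\ref{prop:bound_evolution_problem}. I fix one of the two projections, say $\Proj = \Proj_{\hat \Y \to \hat{\mathscr{S}}}$; the other case is identical with $\Sigma_{\hat\S\to\hat\Y}$ in place of $\Sigma_{\hat\Y\to\hat\S}$. By the triangle inequality \eqref{eq:error},
\[
\Vert x - \hat x\Vert_{\S\cdot\Y} \le \Vert \varrho\Vert_{\S\cdot\Y} + \Vert \vartheta\Vert_{\S\cdot\Y}.
\]
Here $\varrho = x - \Proj x$ and $\vartheta = \Proj x - \hat x$. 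Both $x$ and $\hat x$ satisfy their initial conditions, and $\Proj x\in\hat{\mathscr{S}}\cdot\hat\Y$ by construction. Hence $\vartheta(0)=0$ by \eqref{eq:teta_zero_zero}, which is exactly what Proposition~\ref{prop:bound_evolution_problem} requires.

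Next I apply Proposition~\ref{prop:bound_evolution_problem} to the $\vartheta$-term:
\[
\Vert \vartheta\Vert_{\S\cdot\Y} \le C_\vartheta\bigl(\Vert\varrho\Vert_{\S\cdot\Y}+\Vert\varrho_t\Vert_{\S\cdot\Y}+\Vert\nabla\varrho\Vert_{\S\cdot\Y}\bigr).
\]
This gives
\[
\Vert x-\hat x\Vert_{\S\cdot\Y}\le (1+C_\vartheta)\Vert\varrho\Vert_{\S\cdot\Y}+C_\vartheta\bigl(\Vert\varrho_t\Vert_{\S\cdot\Y}+\Vert\nabla\varrho\Vert_{\S\cdot\Y}\bigr).
\]
Each of the three terms is now bounded by Proposition~\ref{prop:rho_error} with $D=\mathrm{id}$, $D=\partial_t$ and $D=\nabla$ respectively. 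This yields $\Vert D\varrho\Vert_{\S\cdot\Y}\le C_D\,\Sigma_{\hat\Y\to\hat\S}$, where $\Sigma_{\hat\Y\to\hat\S}$ is the same singular value quantity for all three choices of $D$. Only the constant $C_D=\Vert\mathbf\Lambda_\Y^T\mathbf L_\Y^{-T}\Vert_F\Vert\mathbf L_\S^{-1}\mathbf\Lambda_\S\Vert_F$ depends on $D$. Setting
\[
C:=(1+C_\vartheta)C_{\mathrm{id}}+C_\vartheta(C_{\partial_t}+C_\nabla)
\]
gives the claim.

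The only point needing care is regularity. Proposition~\ref{prop:rho_error} for $D=\partial_t$ and $D=\nabla$ requires $\S\subseteq H^1(0,T)$ and $\Y\subseteq H^1_0(\Omega)$, so that $\mathbf J_\S$ and $\mathbf J_\Y$ (and hence $C_{\partial_t}$, $C_\nabla$) are well defined. This is guaranteed by the FOM setting \eqref{eq:fom}. I will also check that Proposition~\ref{prop:bound_evolution_problem} uses the same $\varrho$, i.e.\ the same choice of $\Proj$, as Proposition~\ref{prop:rho_error}.

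Beyond these checks the proof is a direct assembly, with no real obstacle. The constant $C$ depends on the discretization through $C_D$, but it is independent of the reduced dimensions $\hat q$ and $\hat s$.
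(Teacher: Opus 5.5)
Your proposal is correct and matches the paper's proof essentially verbatim. Both use the triangle inequality on $\varrho+\vartheta$, then Proposition~\ref{prop:bound_evolution_problem} for $\vartheta$, then Proposition~\ref{prop:rho_error} with $D\in\{\mathrm{id},\partial_t,\nabla\}$, and your constant $(1+C_\vartheta)C_{\mathrm{id}}+C_\vartheta(C_{\partial_t}+C_\nabla)$ is exactly the paper's $C_{\mathrm{id}}+C_\vartheta(C_{\mathrm{id}}+C_{\partial_t}+C_\nabla)$.
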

\begin{proof}
    We show the statement for $\Proj = \Proj_{\hat \Y \to \hat{\mathscr{S}}}$. The proof for $\Proj = \Proj_{\hat{\mathscr{S}} \to \hat \Y}$ follows with the same arguments. Using $\vartheta$ and $\varrho$ as defined in \eqref{eq:theta_rho}, we estimate
    $$\Vert x - \hat x\Vert_{\S\cdot \Y} = \Vert \varrho + \vartheta \Vert_{\S\cdot\Y} \leq \Vert \varrho \Vert_{ \S \cdot  \Y} + \Vert \vartheta \Vert_{\S\cdot\Y}.$$
    The application of Proposition~\ref{prop:bound_evolution_problem} yields that
    $$\Vert x - \hat x\Vert_{\S\cdot \Y} \leq \Vert \varrho\Vert_{\S\cdot\Y} + C_\vartheta \left(\Vert \varrho\Vert_{\S\cdot\Y} + \Vert \varrho_t\Vert_{\S \cdot \Y} + \Vert \nabla \varrho\Vert_{\S \cdot \Y}  \right),$$
    with $C_\vartheta > 0$. Next, we apply Proposition~\ref{prop:rho_error} in order to estimate the norms on the right hand sides. This results in
    $$\Vert x - \hat x\Vert_{\S\cdot \Y} \leq  \left(C_{\text{id}} + C_\vartheta ( C_{\text{id}}+C_{\partial_t} + C_\nabla )\right) \Sigma_{\hat\Y \rightarrow \hat\S}$$
    and the claim follows with $C := C_{\text{id}} + C_\vartheta ( C_{\text{id}}+C_{\partial_t} + C_\nabla )$.\\
\end{proof}

\begin{remark}\label{rem:sigma_not_zero}
The sums in $\Sigma_{\hat\Y \rightarrow \hat\S}$ and $\Sigma_{\hat\S \rightarrow \hat\Y}$ which sum up the singular-values coming from the reduction in time, start with the index $j = \hat s$ instead of $j=\hat s +1$ as one may expect it from the reduction in space for which the sums start at $i = \hat q + 1$. Hence, if we choose $\hat s = s$ and $\hat q = q$, the error component related to the reduction in space vanishes, while the error component related to the reduction in time does not vanish. This is consistent with the intuition, that the error cannot vanish, since the basis is not optimal anymore due to the alteration of the optimal basis in Section~\ref{sec:space_time_pod_initial} in order to include the initial condition. The remaining error term is equal to the squared singular value which is associated with the singular vector which is cut off in \eqref{eq:different_U}.
\end{remark}

\section{Numerical examples}\label{sec:numerical examples}
In this section, we investigate the error bound numerically for two exemplary problems. While doing so, we will primarily focus on the intuition behind the error bounds and refer for instructions on how to apply space-time POD to \cite[Sections 3 \& 4]{baumann18}, where the numerical implementation of the given problems has been discussed in detail. In Example~\ref{ex:simple} we will focus on the investigation of the error in space-time POD for a specific choice of the reduced dimensions $\hat s$ and $\hat q$. Our goal is to demonstrate the similarity of space-time POD and standard POD with respect to the decreasing behavior of the singular values and its effect to the reduced bases. In Example~\ref{ex:circle}, we will investigate the behavior of the error bound and related terms for changing reduced dimensions $\hat s$ and $\hat q$. In both examples, we apply space-time POD for the problem
\begin{equation}\label{eq:numerical_problem}
    \left\{\begin{array}{rll}
        x_t - \mu\Delta x &= f & \text{in }\Omega_T,\\
        x &= 0 & \text{on }\Sigma_T,\\
        x(0) &= x_0 & \text{in }\Omega,
    \end{array}\right.
\end{equation}
where $[0,T] = [0,1]$, $\Omega = [0,1]\subset \R^1$, $\Omega_T = [0,T]\times \Omega $, $\Sigma_T = [0,T] \times \partial \Omega$ and $\mu > 0$. Problem \eqref{eq:numerical_problem} is a special case of \eqref{eq:evolution_problem} for the choice of $\A(t) = - \mu \Delta$ and we use the space-time variational formulation \eqref{eq:space_time_variational} for the numerical implementation of it. The bilinear and linear form in the variational formulation are in this case given as
\begin{equation*}
    \begin{split}
        B(x,\varphi) &= \int_{\Omega_T} x_t \varphi + \mu \nabla x \cdot \nabla \varphi \intd x \intd t,\\
        L(\varphi) &= \int_{\Omega_T}f \varphi \intd x \intd t.
    \end{split}
\end{equation*}
The computations will be executed on an equidistant grid in space and time with $\Delta \tau = \Delta \xi = 0.01$. For both the computation of the FOM and the ROM, we use a tensor product approach, which has also been used for the ROM in \cite[Sections 3 \& 4]{baumann18}. While \cite{baumann18} used a method of lines approach for the computation of the FOM, we use the tensor product approach for the computation of the FOM as well. The tensor product approach as well as the derivation of the system matrices of the ROM is explained in \cite[Section~3]{baumann18}. The implementations of both examples have been realized in Python. The code is available at \url{https://doi.org/10.5281/zenodo.19710435}.\\

\begin{example}\label{ex:simple}
    We start with an example in which we investigate the error behavior of space-time POD for a fixed choice of $\hat s$ and $\hat q$. In the setup of this example, we choose $\mu = 0.4$. Furthermore, we choose the initial condition $x_0$ and the right hand side $f(\tau, \xi)$, such that a solution to \eqref{eq:numerical_problem} is given by
    $$x(\tau, \xi) = 10 \cdot \sin(\pi\xi - 2\tau) \cdot \cos( \pi \tau-3\xi)\cdot \xi \cdot (\xi-1)^3.$$
    The computed FOM is depicted in Figure~\ref{fig:ex_simple_dynamics}~(a). Furthermore, the singular values of $\mathbf{L}_\Y^T\mathbf{X}\mathbf{L}_\S$, $\mathbf{L}_{ \Y}^T\accentset{\circ}{\mathbf{X}}_{\mathscr{S}\cdot \hat \Y}\mathbf{L}_\S$, $\mathbf{L}_{ \Y}^T\accentset{\circ}{\mathbf{X}}\mathbf{L}_{\S}$ and $\mathbf{L}_{ \Y}^T\mathbf{X}_{\hat{\mathscr{S}} \cdot \Y}\mathbf{L}_{\S}$ are shown in Figure~\ref{fig:ex_simple_results}~(a). Notice, that the singular values of $\mathbf{L}_{ \Y}^T\accentset{\circ}{\mathbf{X}}_{\mathscr{S}\cdot \hat \Y}\mathbf{L}_\S$ and $\mathbf{L}_{ \Y}^T\mathbf{X}_{\hat{\mathscr{S}} \cdot \Y}\mathbf{L}_{\S}$ are dependent on the reduced dimensions $\hat q$ and $\hat s$ and therefore depicted for the choice of $\hat q = 5$ and $\hat s = 15$. While the $\ell$-th singular value of $\mathbf{L}_{ \Y}^T\accentset{\circ}{\mathbf{X}}_{\mathscr{S}\cdot \hat \Y}\mathbf{L}_\S$ vanishes for $\ell \geq \hat q$, the $\ell$-th singular value of $\mathbf{L}_{ \Y}^T\mathbf{X}_{\hat{\mathscr{S}} \cdot \Y}\mathbf{L}_{\S}$ already vanishes for $\ell \geq \hat s -1$ instead of $\ell \geq \hat s$ as one might expect it. The reason for this behavior is the different construction of the time-reduced basis due to the incorporation of the initial condition and results from the proof of Proposition~\ref{prop:rho_error}, where the sum of squared singular values starts with $\hat s$ instead of $\hat s + 1$. This phenomenon can 
    be observed even better in Example~\ref{ex:circle}.
    
    From the decays of the singular values, we conclude that a good choice for the reduced dimension such that only a reasonable amount of information is cut off, is $\hat s =20$ and $\hat q = 20$. Based on this choice, we compute the projection of the solution of the FOM onto the reduced space as well as the corresponding ROM. The error between the solution to the FOM and the projection is depicted in Figure~\ref{fig:ex_simple_dynamics}~(b), while the error between the solution to the FOM and the ROM is depicted in Figure~\ref{fig:ex_simple_dynamics}~(c). The reduced bases in space and time are depicted in Figure~\ref{fig:ex_simple_results}~(b) and (c) respectively. Notice, that for the reduced time basis, the first basis vector in $\hat \Psi$ is chosen as described in Section~\ref{sec:space_time_pod_initial} in order to treat the initial condition. In both $\hat \Psi$ and $\hat \Upsilon$, the basis vectors start to fluctuate at some point. For the reduced time basis $\hat \Psi$, this happens for the sixth basis function. For the reduced space basis, the fluctuations initially enter the basis functions more unnoticed. They cause some fluctuations at the spatial boundaries until they are clearly visible for the 23rd basis function. This behavior does not only occur for space-time POD, but is also known for standard POD, see e.g.\ \cite[Section~5]{gubisch17}. We expect that those fluctuations may cause complications during the computation of the ROM when fluctuating basis functions belonging to small singular values enter the system of equations which is solved when computing the ROM.\\

    \begin{figure}
        \centering
            \begin{subfigure}[t]{0.32\textwidth}
                \centering
                \includegraphics[width=\linewidth]{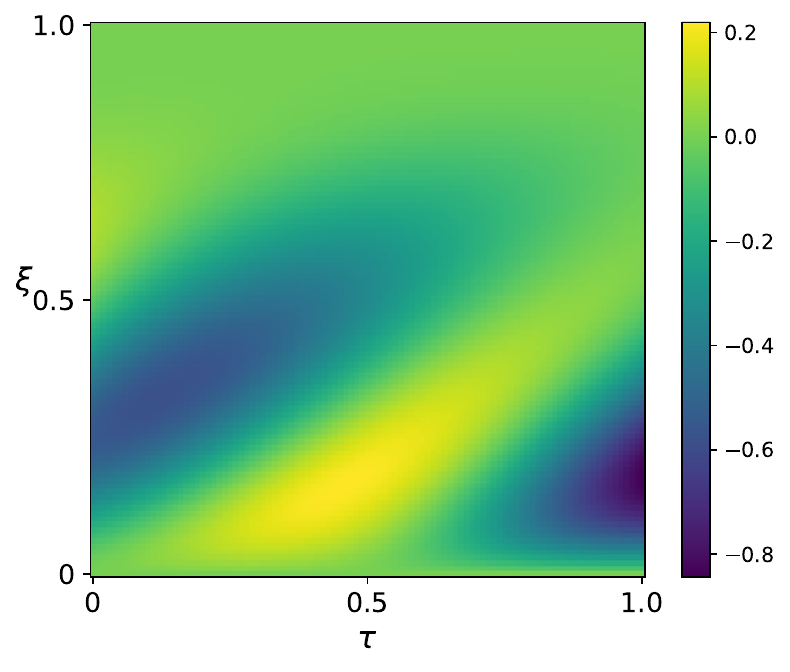}
                \caption{FOM $x(\tau, \xi)$}
            \end{subfigure}
            \hspace*{\fill}
            \begin{subfigure}[t]{0.32\textwidth}
                \centering
                \includegraphics[width=\linewidth]{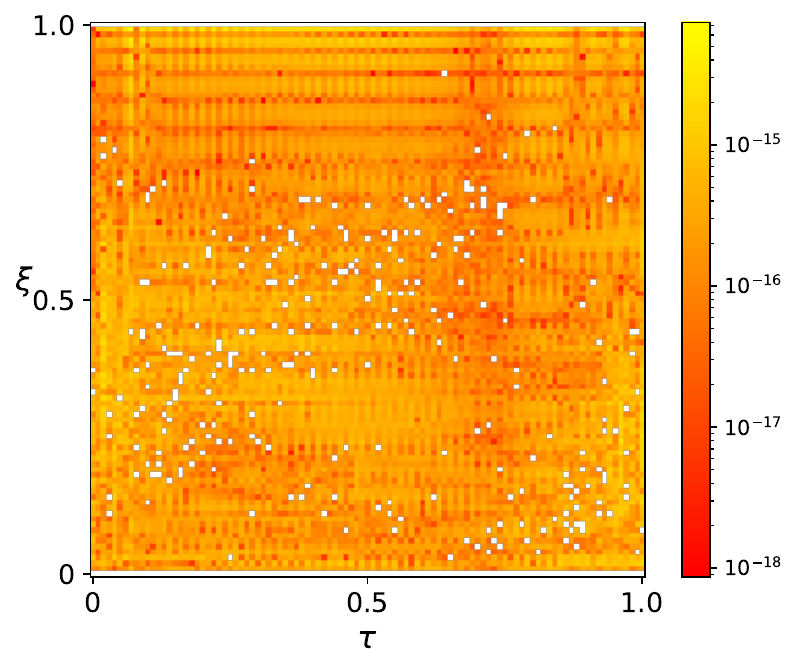}
                \caption{$| x(\tau_j, \xi_i) - (\Proj x)(\tau_j, \xi_i)|$}
            \end{subfigure}
            \hspace*{\fill}
            \begin{subfigure}[t]{0.32\textwidth}
                \centering
                \includegraphics[width=\linewidth]{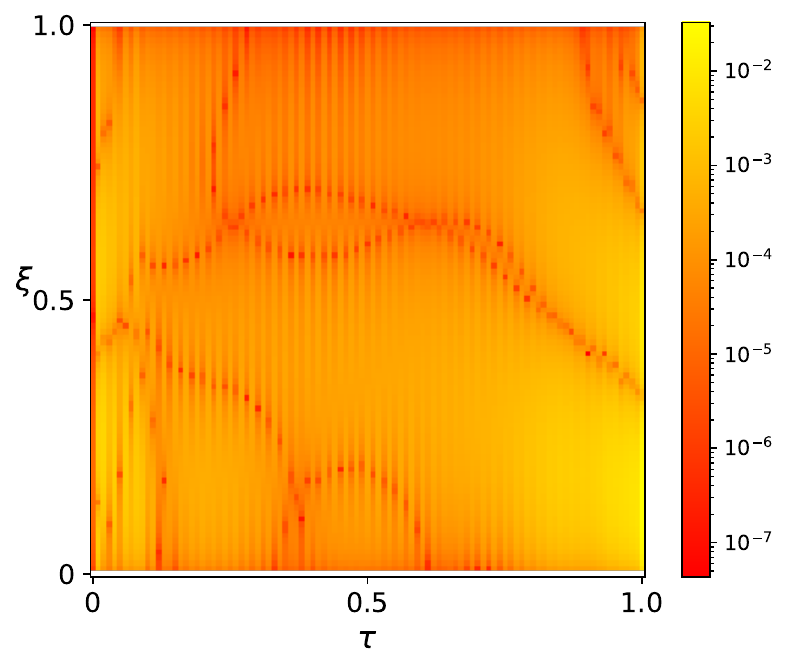}
                \caption{$| x(\tau_j, \xi_i) - \hat x(\tau_j, \xi_i)|$}
            \end{subfigure}

        \caption{Depiction of the full-order dynamics $x$ from Example~\ref{ex:simple} as well as the error between the $x$ and the projection onto $\hat{\mathscr{S}}\cdot \hat \Y$, as well as the error between $x$ and the ROM $\hat x$. The projection has been chosen as $\Proj =\Proj_{\hat \Y \to \hat{\mathscr{S}}}$ and the reduced dimensions as $\hat s = 20$ and $\hat q = 20$. Notice that the plots in (b) and (c) are logarithmically scaled. The white dots in (b) represent points where the error vanishes.}
        \label{fig:ex_simple_dynamics}.
    \end{figure}

    \begin{figure}
        \centering
            \begin{subfigure}[t]{0.32\textwidth}
                \centering
                \includegraphics[width=\linewidth]{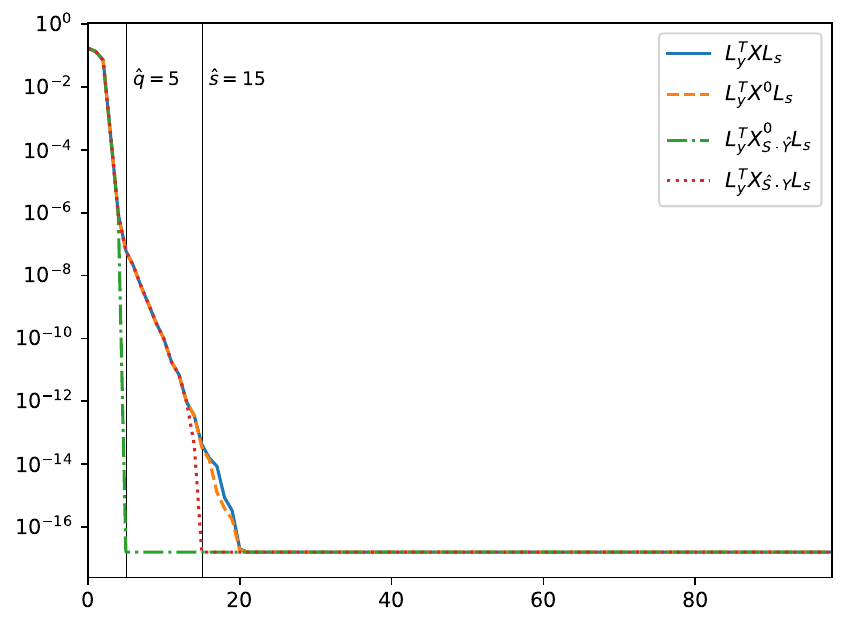}
                \caption{Singular value descent of $\mathbf{L}_\Y^T\mathbf{X}\mathbf{L}_\S$, $\mathbf{L}_{\Y}^T\protect\accentset{\circ}{\mathbf{X}}_{\mathscr{S}\cdot \hat \Y}\mathbf{L}_\S$, $\mathbf{L}_{ \Y}^T\protect\accentset{\circ}{\mathbf{X}}\mathbf{L}_{\S}$ and $\mathbf{L}_{ \Y}^T\mathbf{X}_{\hat{\mathscr{S}} \cdot \Y}\mathbf{L}_{\S}$ for $\hat s = 15$ and $\hat q = 5$.}
            \end{subfigure}
            \hspace*{\fill}
            \begin{subfigure}[t]{0.32\textwidth}
                \centering
                \includegraphics[width=\linewidth]{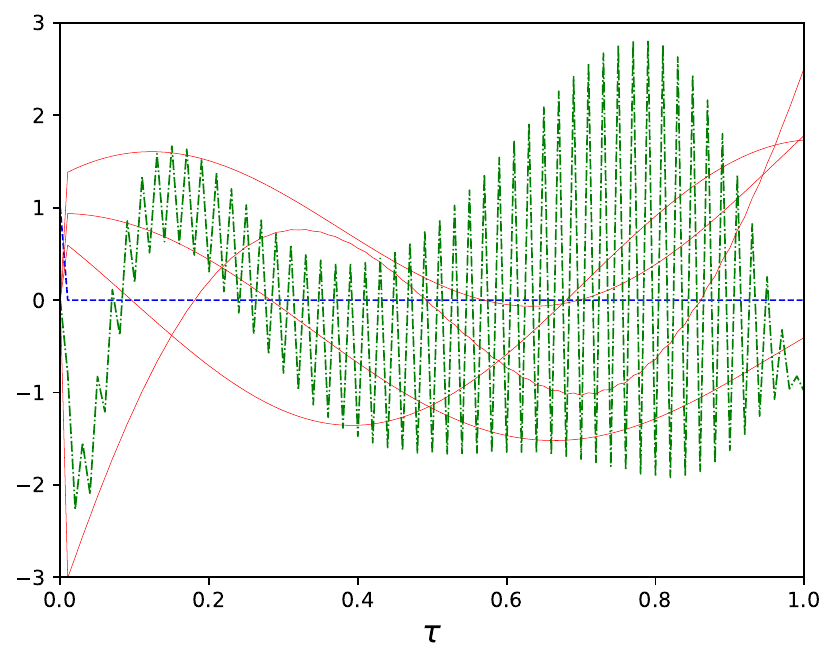}
                \caption{First six basis functions of $\Psi$ with the first basis function being highlighted in dashed blue and the sixth basis function being highlighted in dashdotted green.}
            \end{subfigure}
            \hspace*{\fill}
            \begin{subfigure}[t]{0.32\textwidth}
                \centering
                \includegraphics[width=\linewidth]{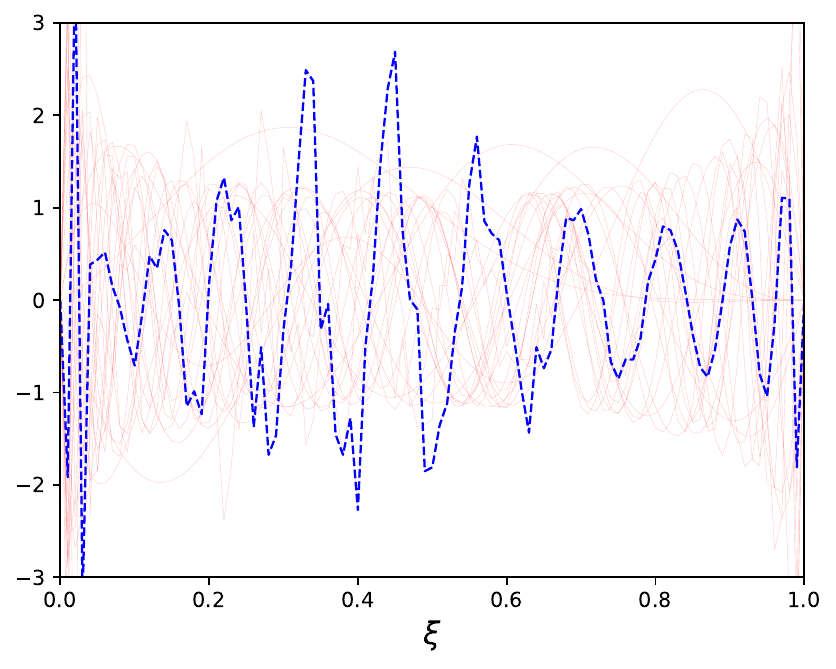}
                \caption{First $23$ basis functions of $\Upsilon$ with the $23$st basis function being highlighted in dashed blue.}
            \end{subfigure}

        \caption{Insights in the reduction process executed in Example~\ref{ex:simple}. Depicted are the decays of the singular values of the matrices, which are involved in the computation of the reduced bases in (a). Furthermore, the first elements of the reduced basis in time $\hat \Psi$ and the reduced basis in space $\hat \Upsilon$ are depicted. They are depicted in (b) and (c) and have been computed separately from each other from $\Psi$ and $\Upsilon$ using Lemma~\ref{lem:space-basis} and Lemma~\ref{lem:time-basis}, where the treatment of the initial condition has been used as described in Section~\ref{sec:space_time_pod_initial} has been applied.}
        \label{fig:ex_simple_results}.
    \end{figure}

\end{example}

\begin{example}\label{ex:circle}
    In this example, we choose $\mu = 1$, the initial condition $x_0 = 0$ and $f(\tau,\xi)= 5 \cdot \chi_D$, where
    $$ \chi_D(\tau, \xi) := \begin{cases}
        1 & \text{if }(\tau, \xi)\in D,\\
        0 & \text{if }(\tau, \xi)\not\in D,
    \end{cases}$$ 
    denotes the indicator function and $D$ a ring in the space-time domain, i.e.\ $D = B_{1/\sqrt 5}(1/2) \setminus B_{1/\sqrt 8}(1/2) \subset [0,T]\times \Omega$.
    The right hand side $f(\tau,\xi)$ and the computed FOM solution $x(\tau,\xi)$ are depicted in Figure~\ref{fig:ex_circle_f_func_f} (a) and (b). As an indicator for the effectiveness of the reduction with space-time POD, we compute the singular values of $\mathbf{L}_\Y^T\mathbf{X}\mathbf{L}_\S$, $\mathbf{L}_{ \Y}^T\accentset{\circ}{\mathbf{X}}_{\mathscr{S}\cdot \hat \Y}\mathbf{L}_\S$, $\mathbf{L}_{ \Y}^T\accentset{\circ}{\mathbf{X}}\mathbf{L}_{\S}$ and $\mathbf{L}_{ \Y}^T\mathbf{X}_{\hat{\mathscr{S}} \cdot \Y}\mathbf{L}_{\S}$, see Figure~\ref{fig:ex_circle_f_func_f}~(c). Notice that the decays of the singular values are, as described in Example~\ref{ex:simple}, dependent on the reduced dimensions $\hat s$ and $\hat q$. In Figure~\ref{fig:ex_circle_f_func_f}~(c), the decays are depicted for the choice of $\hat s = 40$ and $\hat q = 20$. The downward jumping behavior of the curves can be explained with the same arguments as in Example~\ref{ex:simple}.

        \begin{figure}
            \centering
            \begin{subfigure}[t]{0.3\textwidth}
                \centering
                \includegraphics[width=\linewidth]{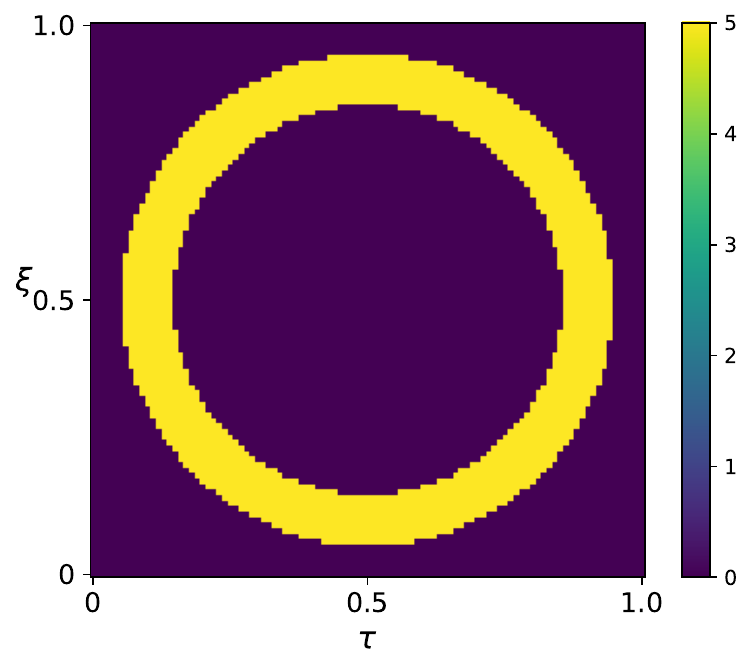}
                \caption{$f(\tau, \xi)$. The value $0$ is depicted in dark blue / violet, while the value $5$ is depicted in light yellow.}
            \end{subfigure}
            \hspace*{\fill}
            \begin{subfigure}[t]{0.31\textwidth}
                \centering
                \includegraphics[width=\linewidth]{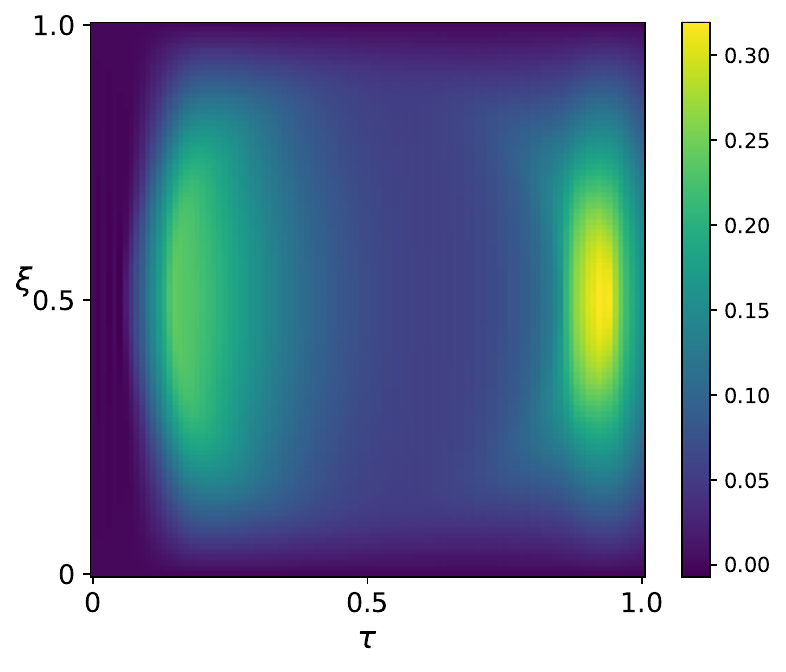}
                \caption{Solution to the FOM computed with finite elements in space-time.}
            \end{subfigure}
            \begin{subfigure}[t]{0.36\textwidth}
                \centering
                \includegraphics[width=\linewidth]{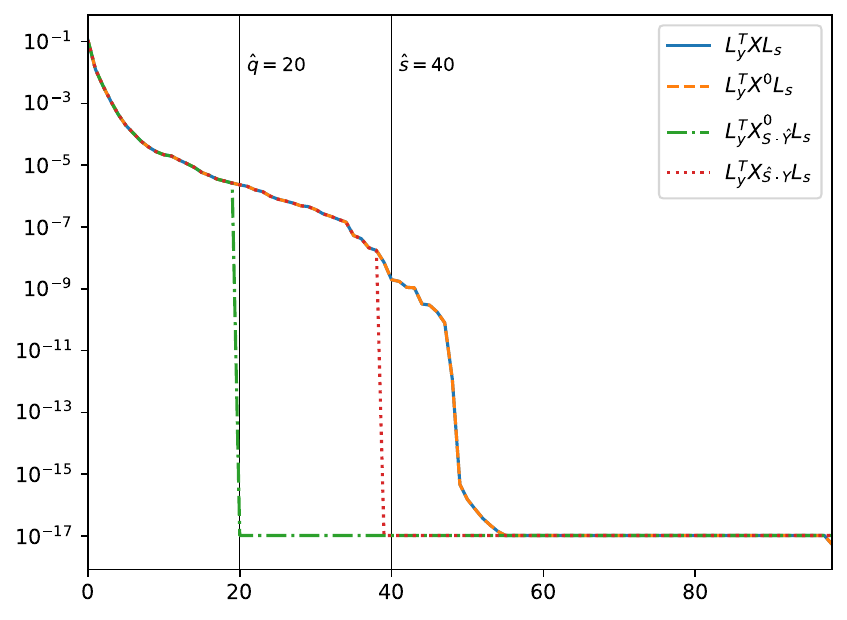}
                \caption{Singular value descent of $\mathbf{L}_\Y^T\mathbf{X}\mathbf{L}_\S$, $\mathbf{L}_{ \Y}^T\accentset{\circ}{\mathbf{X}}_{\mathscr{S}\cdot \hat \Y}\mathbf{L}_\S$, $\mathbf{L}_{ \Y}^T\accentset{\circ}{\mathbf{X}}\mathbf{L}_{\S}$ and $\mathbf{L}_{ \Y}^T\mathbf{X}_{\hat{\mathscr{S}} \cdot \Y}\mathbf{L}_{\S}$ for $\hat q = 20$ and $\hat s = 40$.}
            \end{subfigure}
            \caption{Visualization of the problem setup in Example~\ref{ex:circle}.}
            \label{fig:ex_circle_f_func_f}
        \end{figure}

    The error bound $\Sigma_{\hat\Y \rightarrow \hat\S}$, the error between the solution to the FOM $x$ and its projection onto $\Y \to \hat{\mathscr{S}}$, i.e.\ $\Proj_{\hat \Y \to \hat{\mathscr{S}}} x$, as well as the error between the solution to the FOM and the ROM are displayed in dependence of the reduced dimensions $\hat s$ and $\hat q$ in Figure~\ref{fig:ex_circle_error_comp} and Figure~\ref{fig:ex_circle_error_comp_all}.

    \begin{figure}
        \centering
            \begin{subfigure}[t]{0.32\textwidth}
                \centering
                \includegraphics[width=\linewidth]{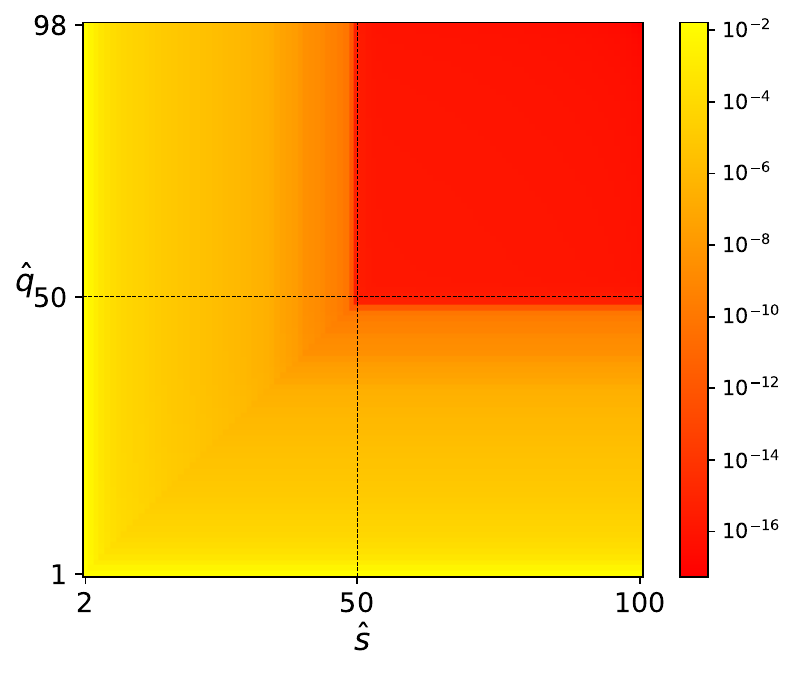}
                \caption{$\Sigma_{\hat\Y \rightarrow \hat\S}$}
            \end{subfigure}
            \hspace*{\fill}
            \begin{subfigure}[t]{0.32\textwidth}
                \centering
                \includegraphics[width=\linewidth]{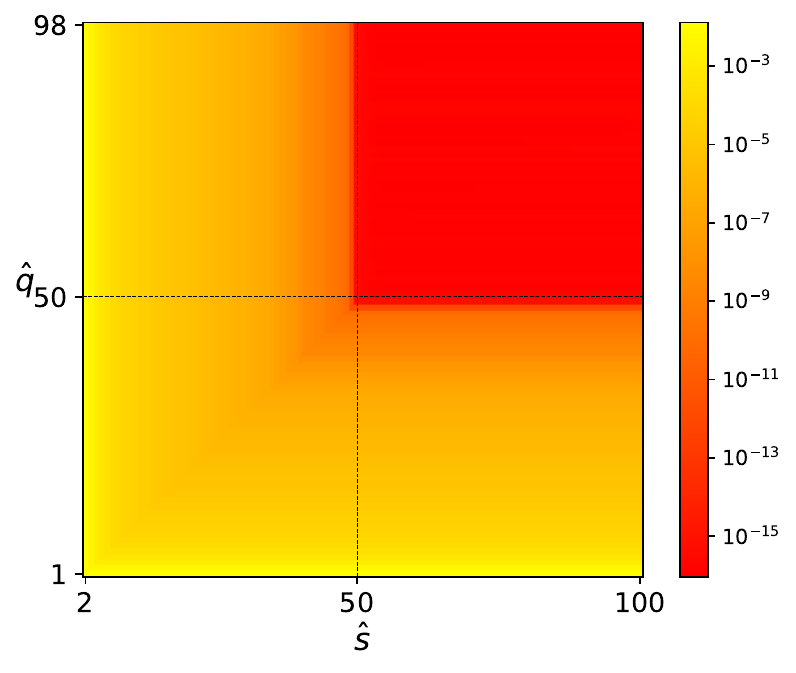}
                \caption{$\Vert x - \Proj x\Vert_{\S\cdot \Y}$}
            \end{subfigure}
            \hspace*{\fill}
            \begin{subfigure}[t]{0.32\textwidth}
                \centering
                \includegraphics[width=\linewidth]{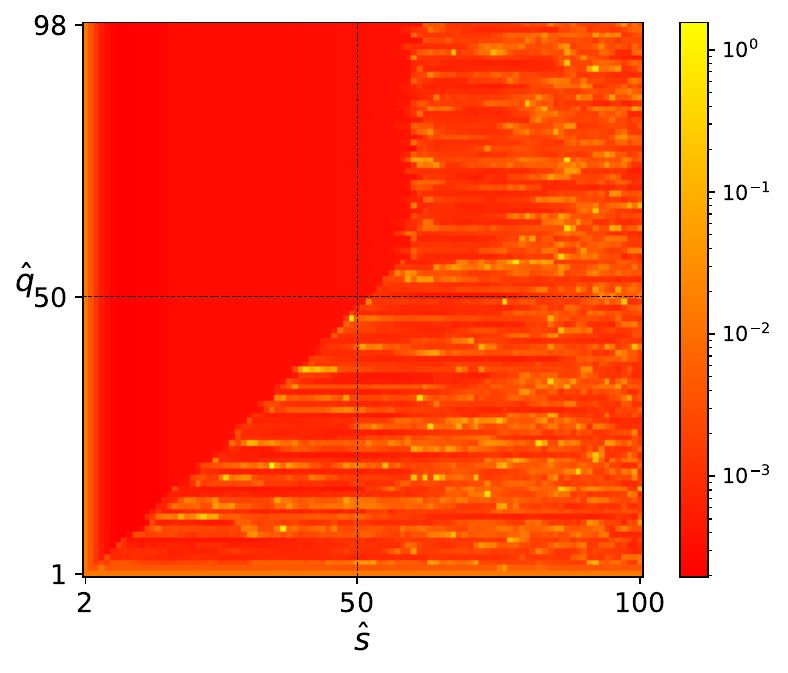}
                \caption{$\Vert x - \hat x\Vert_{\S\cdot \Y}$}
            \end{subfigure}

        \caption{Visualization of the error terms in Example~\ref{ex:circle}. We investigate $\Sigma_{\hat\Y \rightarrow \hat\S}$, $\Vert x - \Proj x\Vert_{\S\cdot \Y}$ and $\Vert x - \hat x\Vert_{\S\cdot \Y}$ in case of $\Proj = \Proj_{\hat \Y \to \hat{\mathscr{S}}}$. In each plot, those error values are color-coded logarithmically in each point for a different choice of the reduced dimensions $\hat s$ and $\hat q$. In each plot, lines indicate the case where $\hat s = \hat q = 50$. This serves as an orientation during a comparison with the values in Figure~\ref{fig:ex_circle_error_comp_all}.}
        \label{fig:ex_circle_error_comp}
    \end{figure}
    
    \begin{SCfigure}
    \includegraphics[width=0.5\linewidth]{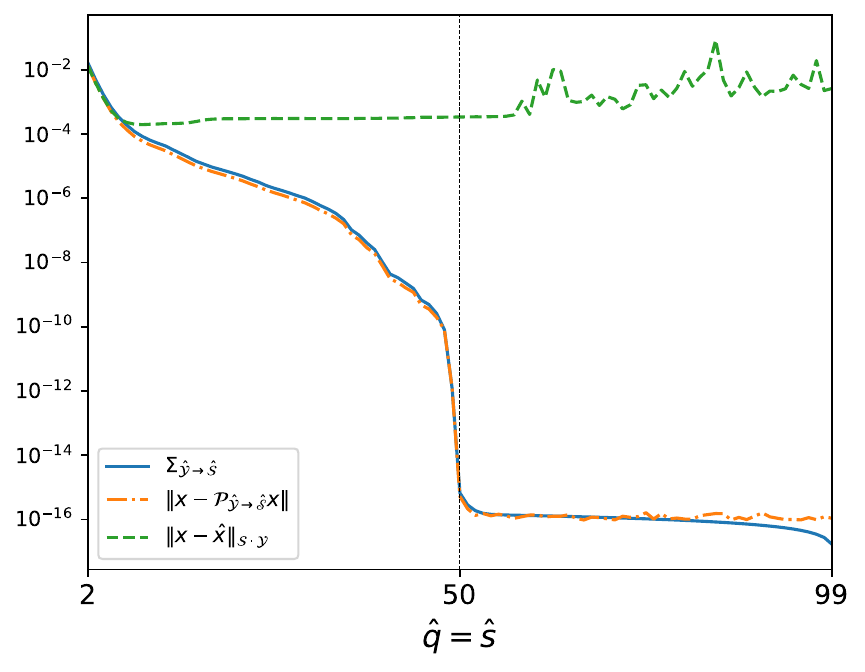}
    \caption{Visualization of the error terms in Example~\ref{ex:circle}. We investigate $\Sigma_{\hat\Y \rightarrow \hat\S}$, $\Vert x - \Proj x\Vert_{\S\cdot \Y}$ and $\Vert x - \hat x\Vert_{\S\cdot \Y}$ in case of $\Proj = \Proj_{\hat \Y \to \hat{\mathscr{S}}}$. Depicted are those error values for the reduced dimensions with $\hat s = \hat q$. The line which indicates the case where $\hat s = \hat q = 52$ serves as an orientation during a comparison with the values in Figure~\ref{fig:ex_circle_error_comp}.\\\,\\}
    \label{fig:ex_circle_error_comp_all}
    \end{SCfigure}

    In the numerical analysis, the projection error is bounded from above by the sum of the singular values $\Sigma_{\hat\Y \rightarrow \hat\S}$. Both of those terms decrease continuously until they reach machine precision. The error occurring during the computation in the ROM first decreases but then stagnates after just a few basis functions. The error increases and shows some fluctuating behavior after the reduced dimensions are chosen to be so large that basis functions belonging to singular values which are close to machine precision enter the ROM system with which the solution to the ROM is computed. The fluctuating behavior which the basis functions belonging to such singular values show, has also already been observed and visualized in Example~\ref{ex:simple}. This phenomenon is not specific for space-time POD, but also occurs during the reduction with standard POD, see e.g.\ \cite{behzad15}.

    This argumentation also explains the triangular shape in the error between the FOM and the ROM for different choices of $\hat s$ and $\hat q$ in Figure~\ref{fig:ex_circle_error_comp}~(c). Notice that the plot shows the errors for $\Proj = \Proj_{\hat \Y \to \hat{\mathscr{S}}}$. Hence, the projection projects the space dimension first and the time dimension afterwards. As described earlier and depicted in Figure~\ref{fig:ex_circle_f_func_f}~(c), the singular values of $\mathbf{L}_{ \Y}^T\accentset{\circ}{\mathbf{X}}_{\mathscr{S}\cdot \hat \Y}\mathbf{L}_\S$ (the matrix on which the followed reduction in time is based upon) drop to machine precision after $\hat q$ singular values. Hence, if the reduced dimension in temporal direction $\hat s$ is chosen larger than the reduced dimension $\hat q$ in spatial direction, singular vectors belonging to singular values with the magnitude of machine precision enter the computation of the ROM and get amplified. Notice that this suggests to choose $\hat s \approx \hat q$.
    
    Finally, we accomplish to get an insight into the magnitude of the constants from Proposition~\ref{prop:rho_error}, on which the constant $C$ in the error estimation in Theorem~\ref{thm:final_estimation} is dependent on. The occurring constants are dependent on the norms $ \Vert\mathbf{L}_\S^{-1}\mathbf{J}_\S\Vert_F$ and $ \Vert\mathbf{J}_\Y^T\mathbf{L}_\Y^{-T}\Vert_F$. Those terms are not dependent on the reduced dimensions $\hat s$ and $\hat q$ and can be computed as $ \Vert\mathbf{L}_\S^{-1}\mathbf{J}_\S\Vert_F\approx 2122.38$ and $ \Vert\mathbf{J}_\Y^T\mathbf{L}_\Y^{-T}\Vert_F \approx 2080.37$.
\end{example}

\section{Conclusion and outlook}

In this work, we have successfully derived an a-priori error estimate for the error between the full order solution obtained by space-time finite element discretization of a linear parabolic PDE and the reduced solution in the space-time reduced space. By doing so, we have gained insights about the convergence of the solution to the reduced system to the solution of the full order model. Furthermore, we have seen how the incorporation of an initial condition and the connected alteration of the optimal reduced space-time basis makes itself noticeable in the error estimate. We have investigated the theoretical error bound numerically and compared the practical appearance of the errors to the theoretical findings.

With the derivation of the error bound for the considered PDE, we aim to lay the foundation for similar error investigations for more advanced dynamics. The numerical consideration of the dynamics in Example~\ref{ex:circle} implies a rule for the choice of the reduced dimensions $\hat s$ and $\hat q $. The question arises, whether the gained insight can be extended as a general rule. Furthermore, we aim to relax the coercivity assumption in \eqref{eq:a_coercive_continuous}, using some technical standard arguments.

 For potential future research on the general topic of space-time POD or extending the space-time POD methodology, we refer to the open questions in \cite{baumann18}, which are still unanswered, e.g.\ the utilization of the underlying tensor structures for the treatment of parameter dependent systems or the extension of space-time POD to nonlinear systems, for which nonlinearities cannot be preassembled as in assumed in \cite{baumann18}.

\section*{Code availability}
The numerical examples in Section~\ref{sec:numerical examples} have been implemented in Python. The source code of the implementations used to create the results from Section~\ref{sec:numerical examples}, can be obtained from \url{https://doi.org/10.5281/zenodo.19710435}.

\section*{Author contributions} % see https://credit.niso.org 
\underline{Jan Heiland}: Resources; Software; Validation; Writing – review \& editing. \underline{Carmen Gräßle}: Conceptualization; Project administration; Supervision; Writing – review \& editing. \underline{Jannis Marquardt}: Conceptualization; Formal analysis; Investigation; Software; Visualization; Writing – original draft; Writing – review \& editing.

% Not used so far:

% Data curation – Management activities to annotate (produce metadata), scrub data and maintain research data (including software code, where it is necessary for interpreting the data itself) for initial use and later re-use.

% Funding acquisition - Acquisition of the financial support for the project leading to this publication.

% Methodology – Development or design of methodology; creation of models.

\bibliographystyle{scientific-computing}
\bibliography{main}

\end{document}